\newtheorem{theorem}{Theorem}[section]
\newtheorem{lemma}[theorem]{Lemma}
\newtheorem{proposition}[theorem]{Proposition}
\theoremstyle{definition}
\newcommand{\IR}{\mathbb{R}}
\newcommand{\IC}{\mathbb{C}}
\newcommand{\IN}{\mathbb{N}}
\newcommand{\IP}{\mathbb{P}}
\newcommand{\cM}{\mathcal{M}}
\newcommand{\cR}{\mathcal{R}}
\newcommand{\cA}{\mathcal{A}}
\newcommand{\cB}{\mathcal{B}}
\newcommand{\cT}{\mathcal{T}}
\renewcommand{\L}{\mathrm{L}}
\newcommand{\C}{\mathrm{C}}
\renewcommand{\H}{\mathrm{H}}
\renewcommand{\S}{\mathrm{S}}
\newcommand{\W}{\mathrm{W}}
\newcommand{\M}{\mathrm{M}}
\newcommand{\Lloc}{\L_{\mathrm{loc}}}
\newcommand{\fa}{\mathfrak{a}}
\newcommand{\fb}{\mathfrak{b}}
\newcommand{\abs}[1]{\left|#1\right|}
\newcommand{\e}{\mathrm{e}}
\renewcommand{\d}{\mathrm{d}}
\newcommand{\eps}{\varepsilon}
\newcommand{\loc}{\mathrm{loc}}
\renewcommand\Re{\operatorname{Re}}
\newcommand{\Lop}{\mathcal{L}}
\newcommand{\pv}{\mathrm{p.v.}}
\DeclareMathOperator{\supp}{supp}
\DeclareMathOperator{\diam}{diam}
\DeclareMathOperator{\Id}{Id}
\DeclareMathOperator{\dom}{\mathcal{D}}
\numberwithin{equation}{section}
\title[$\L^p$-extrapolation of non-local operators]{$\L^p$-extrapolation of non-local operators: \\ Maximal regularity of elliptic integrodifferential operators with measurable coefficients}
\author{Patrick Tolksdorf}
\address{Institut f\"ur Mathematik, Johannes Gutenberg-Universit\"at Mainz, Staudingerweg 9, 55099 Mainz, Germany}
\email{tolksdorf@uni-mainz.de}
\subjclass[2010]{}
\date{\today}
\thanks{}
\dedicatory{In the occasion of the $60$th birthday of Matthias Hieber}
\begin{document}
\begin{abstract}
The aim of this article is to deepen the understanding of the derivation of $\L^p$-estimates of non-local operators. We review the $\L^p$-extrapolation theorem of Shen~\cite{Shen} which builds on a real variable argument of Caffarelli and Peral~\cite{Caffarelli_Peral} and adapt this theorem to account for non-local weak reverse H\"older estimates. These non-local weak reverse H\"older estimates appear for example in the investigation of non-local elliptic integrodifferential operators. This originates from the fact that here only a non-local Caccioppoli inequality is valid, see Kuusi, Mingione, and Sire~\cite{Kuusi_Mingione_Sire}. As an application, we prove resolvent estimates and maximal regularity properties in $\L^p$-spaces of non-local elliptic integrodifferential operators.
\end{abstract}
\maketitle

\section{Introduction}

\noindent Non-local phenomena play a major role in many different areas in the study of partial differential equations~\cite{Craig_Schanz_Sulem, Toland, Alberti_Bellettini, Cabre_Sola-Morales, Sire_Valdinoci, Caffarelli_Roquejoffre_Sire, Caffarelli_Vasseur, Majda_Tabak}. In particular, in mathematical fluid mechanics non-local phenomena arise naturally due to the presence of the pressure and the imposed solenoidality of the velocity field. One prominent example of a non-local operator in the study of mathematical fluid mechanics is the Stokes operator that is given --- if the underlying domain is regular enough --- by the Helmholtz projection $\IP$ applied to the Laplacian $- \Delta$. \par
Another prominent example is the so-called dissipative surface quasi-geostrophic equation. It is often studied as a model equation to understand non-linear mechanisms~\cite{Constantin, Majda_Tabak} connected to the Navier--Stokes equations. The dissipative surface quasi-geostrophic equation involves the fractional Laplacian which is given in the whole space for $\alpha \in (0 , 1)$ and $u \in \C_c^{\infty} (\IR^d)$ by
\begin{align}
\label{Eq: Fractional Laplacian}
 [(- \Delta)^{\alpha} u] (x) := C_{d , \alpha} \, \pv \int_{\IR^d} \frac{u(x) - u(y)}{\lvert x - y \rvert^{d + 2 \alpha}} \; \d y.
\end{align}
Here, $C_{d , \alpha} > 0$ denotes a suitable normalization constant. \par
For both types of operators certain mapping properties cease to exist in irregular situations. Indeed, it is well-known that the Stokes operator does \textit{not} generate a strongly continuous semigroup on $\L^p_{\sigma}$ if $p > 2$ is large enough and if it is considered in a bounded Lipschitz domain~\cite{Deuring}. Also the Riesz-transform $\nabla (- \Delta_D)^{- 1 / 2}$ is not bounded in general on $\L^p$ for $p > 3$ if it is considered in an arbitrary bounded Lipschitz domain~\cite{Jerison_Kenig}. Here, $- \Delta_D$ denotes the Dirichlet Laplacian on the underlying domain. Such phenomena do not only occur in the presence of an irregular boundary, but also in smooth geometric constellations in the presence of irregular coefficients. For example, if $A = - \nabla \cdot \mu \nabla$ denotes an elliptic operator with complex $\L^{\infty}$-coefficients, an example by Frehse~\cite{Frehse, Hofmann_Mayboroda_McIntosh} shows that for any $p > 2d / (d - 2)$ there exists a complex-valued, strongly elliptic matrix $\mu \in \L^{\infty}$ such that $(\Id + A)^{-1}$ does not map $\L^p$ into itself. Here, the dimension $d$ satisfies $d \geq 3$. \par
Important tools to reveal for which numbers $p \in (1 , \infty)$ certain $\L^p$-mapping properties do still hold are so-called \textit{$p$-sensitive Calder\'on--Zygmund theorems}. Amongst others, there is the $\L^p$-extrapolation theorem of Shen~\cite{Shen} which builds on a real variable argument of Caffarelli and Peral~\cite[Sec.~1]{Caffarelli_Peral}. This argument was already successfully applied to reveal properties of the Stokes operator in Lipschitz domains, Riesz-transforms of elliptic operators, homogenization theory of elliptic operators, maximal regularity properties of elliptic operators, and elliptic boundary value problems~\cite{Caffarelli_Peral, Shen, Shen_2, Shen_3, Shen_4, Wei_Zhang, Kenig_Lin_Shen, Tolksdorf, terElst_et_al}. In its whole space version, this $\L^p$-extrapolation theorem reads as follows, cf.~\cite[Thm.~3.1]{Shen}. For its formulation we denote by $\Lop(E , F)$ set of all bounded linear operators between two Banach spaces $E$ and $F$ and by $\fint_B \d x$ the mean value integral over a measurable set $B$ with $0 < \lvert B \rvert < \infty$.

\begin{theorem}[Shen]
\label{Thm: Shen}
Let $T \in \Lop(\L^2 (\IR^d) , \L^2 (\IR^d))$ be a bounded linear operator. Assume there exist $p > 2$, $\iota_2 > \iota_1 > 1$, and $C > 0$ such that for all $x_0 \in \IR^d$, $r > 0$, and all compactly supported functions $f \in \L^{\infty} (\IR^d)$ with $f \equiv 0$ in $B(x_0 , \iota_2 r)$ the inequality
\begin{align}
\label{Eq: Original Shen}
 \bigg( \fint_{B(x_0 , r)} \lvert T f \rvert^p \; \d x \bigg)^{\frac{1}{p}} \leq C \bigg\{ \bigg( \fint_{B(x_0 , \iota_1 r)} \lvert T f \rvert^2 \; \d x \bigg)^{\frac{1}{2}} + \sup_{B^{\prime} \supset B} \bigg( \fint_{B^{\prime}} \lvert f \rvert^2 \; \d x \bigg)^{\frac{1}{2}} \bigg\}
\end{align}
holds. Here, the supremum runs over all balls $B^{\prime}$ containing $B$. \par
Then, for all $2 < q < p$ the restriction of the operator $T$ to $\L^2 (\IR^d) \cap \L^q (\IR^d)$ extends to a bounded operator on $\L^q (\IR^d)$. Furthermore, the $\L^q$-operator norm of $T$ can be quantified by the constants above.
\end{theorem}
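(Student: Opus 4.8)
I propose to run the real variable argument of Caffarelli and Peral \cite[Sec.~1]{Caffarelli_Peral} in the form used by Shen \cite{Shen}: on every ball, the first average on the right-hand side of \eqref{Eq: Original Shen} will be controlled by the $\L^2$-boundedness of $T$, while the second (the supremum of $\L^2$-means of $f$) is exactly a Hardy--Littlewood maximal function of $\lvert f\rvert^2$. First I would reduce to the homogeneous estimate $\|Tf\|_{\L^q(\IR^d)}\le C\|f\|_{\L^q(\IR^d)}$ for $f\in\L^\infty(\IR^d)$ with compact support --- a class dense in $\L^q(\IR^d)$ and contained in $\L^2(\IR^d)\cap\L^q(\IR^d)$ --- the bounded extension to $\L^q(\IR^d)$ and its coincidence with $T$ on $\L^2\cap\L^q$ then following from density and the $\L^2$-continuity of $T$. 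Throughout I write $F:=Tf\in\L^2(\IR^d)$, $g:=\lvert f\rvert^2\in\L^1(\IR^d)\cap\L^{q/2}(\IR^d)$, so that $\|g\|_{\L^{q/2}}^{1/2}=\|f\|_{\L^q}$, and $\cM$ for the Hardy--Littlewood maximal operator; for a ball $B$ and $\iota>0$, $\iota B$ denotes the concentric dilate.

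The heart of the construction is a per-ball splitting of $F$. Given $B=B(x_0,r)$, decompose $f=f\ind_{\iota_2 B}+f\ind_{\IR^d\setminus\iota_2 B}$ and set $G_B:=T(f\ind_{\iota_2 B})$, $H_B:=T(f\ind_{\IR^d\setminus\iota_2 B})$, so that $F=G_B+H_B$ on all of $\IR^d$. For the local part, the $\L^2$-boundedness of $T$ and $\iota_1<\iota_2$ give
\begin{align*}
 \fint_{\iota_1 B}\lvert G_B\rvert^2\,\d x \le \frac{\|T\|_{\Lop(\L^2,\L^2)}^2}{\lvert\iota_1 B\rvert}\int_{\iota_2 B}\lvert f\rvert^2\,\d x = C\fint_{\iota_2 B} g\,\d x \le C\sup_{B'\supset B}\fint_{B'} g\,\d x ,
\end{align*}
with $C=C(d,\iota_1,\iota_2,\|T\|_{\Lop(\L^2,\L^2)})$, and a fortiori the same bound holds with $\fint_B$ in place of $\fint_{\iota_1 B}$. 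The function $f\ind_{\IR^d\setminus\iota_2 B}$ is bounded, compactly supported and vanishes on $\iota_2 B=B(x_0,\iota_2 r)$, so \eqref{Eq: Original Shen} applies to it; combining it with $\lvert H_B\rvert^2\le 2\lvert F\rvert^2+2\lvert G_B\rvert^2$, the previous line, and $\lvert f\ind_{\IR^d\setminus\iota_2 B}\rvert\le\lvert f\rvert$ yields
\begin{align*}
 \bigg(\fint_B\lvert H_B\rvert^p\,\d x\bigg)^{1/p}\le C\bigg\{\bigg(\fint_{\iota_1 B}\lvert F\rvert^2\,\d x\bigg)^{1/2}+\bigg(\sup_{B'\supset B}\fint_{B'} g\,\d x\bigg)^{1/2}\bigg\}.
\end{align*}
Since $\sup_{B'\supset B}\fint_{B'} g\le(\cM g)(y)$ for every $y\in B$, these two estimates are exactly the input of Shen's real variable lemma: on each ball $F$ splits into a piece small in $\L^2$-mean in terms of $(\cM g)^{1/2}$ and a piece small in $\L^p$-mean in terms of the $\L^2$-mean of $F$ on a fixed dilate plus the same maximal term.

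With this input I would run the good-$\lambda$ argument on the superlevel sets $E_\lambda:=\{\cM(\lvert F\rvert^2)>\lambda\}$. Taking a Vitali/Calder\'on--Zygmund decomposition of $E_\lambda$ into Whitney-type balls $B_i$ on dilates of which $\fint\lvert F\rvert^2\lesssim\lambda$, applying the splitting above on (a dilate of) each $B_i$, and estimating $\lvert\{x\in B_i:\cM(\lvert F\rvert^2)(x)>A\lambda,\ \cM g(x)\le\eps\lambda\}\rvert$ by the weak $(1,1)$ inequality for $\cM$ applied to $\lvert G_{B_i}\rvert^2$ (via the global $\L^2$-estimate on $G_{B_i}$) together with the $\L^{p/2}$-boundedness of $\cM$ applied to $\lvert H_{B_i}\rvert^2$ on a dilate of $B_i$ (via the estimate on $H_{B_i}$, the contribution of large balls to $\cM(\lvert H_{B_i}\rvert^2)$ on $B_i$ being harmlessly $\lesssim\lambda$), one obtains a good-$\lambda$ inequality
\begin{align*}
 \lvert E_{A\lambda}\cap\{\cM g\le\eps\lambda\}\rvert\le C\big(A^{-p/2}+\eps\,A^{-1}\big)\,\lvert E_\lambda\rvert .
\end{align*}
Choosing $A$ large and then $\eps$ small (here $q<p$ is used, so $A^{q/2-p/2}\to 0$), multiplying by $\lambda^{q/2-1}$, integrating in $\lambda$, using the layer-cake formula and absorbing gives $\|\cM(\lvert F\rvert^2)\|_{\L^{q/2}}\le C\|\cM g\|_{\L^{q/2}}$. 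The Hardy--Littlewood maximal theorem ($q/2>1$) then closes the argument:
\begin{align*}
 \|Tf\|_{\L^q(\IR^d)}^2=\big\|\lvert F\rvert^2\big\|_{\L^{q/2}}\le\big\|\cM(\lvert F\rvert^2)\big\|_{\L^{q/2}}\le C\|\cM g\|_{\L^{q/2}}\le C\|g\|_{\L^{q/2}}=C\|f\|_{\L^q(\IR^d)}^2 ,
\end{align*}
with $C$ depending only on $d$, $p$, $q$, $\iota_1$, $\iota_2$, $\|T\|_{\Lop(\L^2,\L^2)}$ and the constant in \eqref{Eq: Original Shen}.

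The step I expect to be the main obstacle is the absorption: it presupposes $\cM(\lvert F\rvert^2)\in\L^{q/2}(\IR^d)$, which is not guaranteed by $F\in\L^2(\IR^d)$ alone when $q>2$. I would handle this by a truncation --- integrating in $\lambda$ only up to a finite height (equivalently, working with $\min(\cM(\lvert F\rvert^2),k)$ or a spatial truncation), using $\lvert E_\lambda\rvert\lesssim\|F\|_{\L^2}^2/\lambda$ to keep the partial integrals finite, deriving the estimate with a constant independent of the truncation level, and passing to the limit by monotone convergence --- so that the resulting bound is genuinely homogeneous, with no $\|f\|_{\L^2}$-defect. The remaining points --- reconciling the dilation constants ($\iota_1<\iota_2$, the Whitney dilation, the Vitali constant), verifying that each $f\ind_{\IR^d\setminus\iota_2 B}$ meets the admissibility conditions of \eqref{Eq: Original Shen}, and making the "large ball" part of the $\cM(\lvert H_{B_i}\rvert^2)$ estimate precise --- are routine.
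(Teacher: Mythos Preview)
Your proposal is correct and follows essentially the same route as the paper's argument in Section~\ref{Sec: Shen} (which proves the non-local variant Theorem~\ref{Thm: Shens Lp extrapolation theorem, Banach space valued} but explicitly reproduces Shen's original scheme): the per-ball splitting $f=f\ind_{\iota_2 B}+f\ind_{\IR^d\setminus\iota_2 B}$, the good-$\lambda$ argument on the superlevel sets of $\cM(\lvert Tf\rvert^2)$, the weak-$(1,1)$ bound for the near piece and the $\L^{p/2}$-bound of the localized maximal operator for the far piece, and the finite-height truncation to justify absorption are all exactly what the paper does. The only cosmetic differences are that the paper works with the dyadic Calder\'on--Zygmund decomposition of Lemma~\ref{Lem: Calderon-Zygmund decomposition} on cubes (arguing by contradiction via condition~$\mathrm{(ii)}$) rather than a Whitney/Vitali covering of $E_\lambda$ by balls, and that it phrases the good-$\lambda$ conclusion as $\lvert E(A\lambda)\rvert\le\delta\lvert E(\lambda)\rvert+\lvert\{\cM(\lvert f\rvert^2)>\gamma\lambda\}\rvert$ rather than in your intersected form; both are standard and interchangeable.
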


See~\cite[Thm.~3.3]{Shen} for a version of this theorem in bounded Lipschitz domains and~\cite[Thm.~4.1]{Tolksdorf} for an extension to Lebesgue measurable sets and the vector-valued case. If an estimate of the form~\eqref{Eq: Original Shen} can be established but without the term involving the supremum, the corresponding inequality is called a weak reverse H\"older estimate. \par
In~\cite{Tolksdorf} and~\cite{terElst_et_al} one finds applications of Shen's $\L^p$-extrapolation theorem to establish mapping properties of resolvents of elliptic operators in divergence form with irregular coefficients and in irregular domains. The ingredients to establish the required weak reverse H\"older estimates in the corresponding situations are well-known as only Sobolev's embedding, Caccioppoli's inequality, and Moser's iteration are used. However, these basic ingredients suffice to unveil optimal mapping properties for the resolvent operators. Unfortunately, these basic techniques cease to work to establish a weak reverse H\"older estimate suitable for Theorem~\ref{Thm: Shen} if the elliptic operator in divergence form is replaced by an operator that is non-local as for example the Stokes operator. Motivated by this fact, we study here as ``toy-operators'' non-local elliptic integrodifferential operators of order $2 \alpha$, where $\alpha \in (0 , 1)$. These operators generalize the fractional Laplacian in the whole space given by~\eqref{Eq: Fractional Laplacian} and are defined via the form method as follows: \par
Define the for $\alpha \in (0 , 1)$ the fractional Sobolev space $\W^{\alpha , 2} (\IR^d)$ by
\begin{align*}
 \W^{\alpha , 2} (\IR^d) := \{ f \in \L^2 (\IR^d) : \| f \|_{\W^{\alpha , 2} (\IR^d)} < \infty \}
\end{align*}
where
\begin{align*}
 \| f \|_{\W^{\alpha , 2} (\IR^d)}^2 := \| f \|_{\L^2 (\IR^d)}^2 + \int_{\IR^d} \int_{\IR^d} \frac{\lvert f (x) - f (y) \rvert^2}{\lvert x - y \rvert^{d + 2 \alpha}} \; \d x \; \d y.
\end{align*}
Having second-order elliptic operators in divergence form in mind, we generalize~\eqref{Eq: Fractional Laplacian} the variational definition of by considering the sesquilinear form defined by
\begin{align}
\label{Eq: Sesquilinear form}
\begin{aligned}
 \fa : \W^{\alpha , 2} (\IR^d) \times \W^{\alpha , 2} (\IR^d) &\to \IC \\
 (u , v) &\mapsto \int_{\IR^d} \int_{\IR^d} K (x , y) (u (x) - u (v)) \overline{(v(x) - v(y))} \; \d x \; \d y,
\end{aligned}
\end{align}
where $K : \IR^d \times \IR^d \to \IC$ is measurable and satisfies for some $0 < \Lambda < 1$ the ellipticity and boundedness conditions
\begin{align}
\label{Eq: Ellipticity}
 0 < \frac{\Lambda}{\lvert x - y \rvert^{d + 2 \alpha}} \leq \Re(K (x , y)) \leq \lvert K(x , y) \rvert \leq \frac{\Lambda^{- 1}}{\lvert x - y \rvert^{d + 2 \alpha}} \qquad (\text{a.e.\@ } x , y \in \IR^d).
\end{align}
Define the realization $A$ of $\fa$ on $\L^2 (\IR^d)$ by
\begin{align*}
 \dom(A) := \{ u \in \W^{\alpha , 2} (\IR^d) : \exists f \in \L^2 (\IR^d) \text{ such that } \fa (u , v) = \langle f , v \rangle_{\L^2} \; \forall v \in \W^{\alpha , 2} (\IR^d) \}
\end{align*}
and for $u \in \dom(A)$ with associated function $f$ the image of $A$ under $u$ is defined as
\begin{align*}
 A u := f.
\end{align*}

\indent Recently, there was a brisk interest in such operators and in certain non-linear counterparts~\cite{Kassmann, Kuusi_Mingione_Sire, Bass_Ren, Biccari_Warma_Zuazua, Biccari_Warma_Zuazua_2, Leonori_Peral_Primo_Soria, Schikorra, Auscher_Bortz_Egert_Saari}. We would like to highlight the work of Kuusi, Mingione, and Sire~\cite{Kuusi_Mingione_Sire} where the following non-local Caccioppoli-type inequality was proven
\begin{align*}
 &\int_{B(x_0 , 2 r)} \int_{B(x_0 , 2 r)} \frac{\lvert u (x) \eta (x) - u (y) \eta (y) \rvert^2}{\lvert x - y \rvert^{d + 2 \alpha}} \; \d x \; \d y \\
 &\qquad \leq C \bigg( \frac{1}{r^{2 \alpha}} \int_{B(x_0 , 2 r)} \lvert u (x) \rvert^2 \; \d x + \int_{B(x_0 , 2 r)} \lvert u (x) \rvert \; \d x \int_{\IR^d \setminus B(x_0 , 2 r)} \frac{\lvert u (y) \rvert}{\lvert x_0 - y \rvert^{d + 2 \alpha}} \; \d y \bigg)
\end{align*}
for functions $u \in \W^{\alpha , 2} (\IR^d)$ that satisfy
\begin{align}
\label{Eq: Variational zero}
 \fa (u , v) = 0 \qquad (v \in \C_c^{\infty} (B(x_0 , 3 r)))
\end{align}
and for any smooth function $\eta$ with $0 \leq \eta \leq 1$, $\eta = 1$ in $B(x_0 , r)$, $\eta \equiv 0$ in $\IR^d \setminus B(x_0 , 2 r)$, and $\| \nabla \eta \|_{\L^{\infty}} \leq C / r$. The authors in~\cite{Kuusi_Mingione_Sire} proved this inequality under slightly different assumptions on the sesquilinear form, the most important is that they considered even non-linear equations. In contrast to the results in~\cite{Kuusi_Mingione_Sire}, we consider here also complex-valued coefficients $K$ what can be seen as a minor improvement to~\cite{Kuusi_Mingione_Sire}. Additionally, we do not consider solutions to~\eqref{Eq: Variational zero} but we prove that the same Caccioppoli-type inequality is valid for solutions to the homogeneous resolvent problem
\begin{align*}
 \lambda \langle u , v \rangle_{\L^2} + \fa (u , v) = 0 \qquad (v \in \C_c^{\infty} (B(x_0 , 3 r))),
\end{align*}
where $\lambda \in \S_{\theta} := \{ z \in \IC \setminus \{ 0 \} : \lvert \arg(z) \rvert < \theta \}$ for some $\theta \in (\pi / 2 , \pi)$ depending on $\Lambda$. It is important to note that the constant $C$ in the Caccioppoli-type inequality is uniform with respect to $\lambda$. This is proven in Proposition~\ref{Prop: Caccioppoli} below. This allows us by an application of Sobolev's embedding theorem to establish in Lemma~\ref{Lem: Nonlocal} a non-local weak reverse H\"older estimate for such solutions. In particular, this enables us to apply to each of the resolvent operators $T_{\lambda} := \lambda (\lambda + A)^{-1}$ the following $\L^p$-extrapolation theorem for non-local operators which is proven in Section~\ref{Sec: Shen}.

\begin{theorem}
\label{Thm: Shens Lp extrapolation theorem, Banach space valued}
Let $X$ and $Y$ be Banach spaces, $\cM > 0$, and let 
\begin{align*}
 T \in \Lop(\L^2(\IR^d ; X) , \L^2(\IR^d ; Y)) \quad \text{with} \quad \| T \|_{\Lop(\L^2(\IR^d ; X) , \L^2(\IR^d ; Y))} \leq \cM.
\end{align*}

\indent Suppose that there exist constants $p > 2$, $\iota > 1$, and $C > 0$ such that for all balls $B \subset \IR^d$ and all compactly supported $f \in \L^{\infty}(\IR^d ; X)$ with $f = 0$ in $\iota B$ the estimate
\begin{align}
\label{Eq: Weak reverse Hoelder inequality}
 \begin{aligned}
 \bigg( \fint_{B} \| T f \|_Y^p \; \d x \bigg)^{\frac{1}{p}} &\leq C \sup_{B^{\prime} \supset B} \bigg( \fint_{B^{\prime}} \big( \| T f \|_Y^2 + \| f \|_X^2 \big) \; \d x \bigg)^{\frac{1}{2}}
 \end{aligned}
\end{align}
holds. Here the supremum runs over all balls $B^{\prime}$ containing $B$. \par
Then for each $2 < q < p$ the restriction of $T$ onto $\L^2(\IR^d ; X) \cap \L^q(\IR^d ; X)$ extends to a bounded linear operator from $\L^q (\IR^d ; X)$ into $\L^q(\Omega ; Y)$, with operator norm bounded by a constant depending on $d$, $p$, $q$, $\iota$, $C$, and $\cM$.
\end{theorem}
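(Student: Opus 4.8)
The plan is to feed the non-local weak reverse H\"older estimate \eqref{Eq: Weak reverse Hoelder inequality} into the real variable argument of Caffarelli and Peral \cite{Caffarelli_Peral} in the form refined by Shen \cite{Shen}, suitably adapted so that a supremum over enlarged balls is allowed on the right-hand side. Since $T$ restricted to $\L^2(\IR^d ; X) \cap \L^q(\IR^d ; X)$ already takes values in $\L^2(\IR^d ; Y)$, and since the bounded, compactly supported $X$-valued functions are dense in $\L^q(\IR^d ; X)$, it suffices to produce a constant $c = c(d,p,q,\iota,C,\cM)$ with
\[
 \| Tf \|_{\L^q(\IR^d ; Y)} \le c \, \| f \|_{\L^q(\IR^d ; X)}
\]
for every $f \in \L^2(\IR^d ; X) \cap \L^{\infty}(\IR^d ; X)$ of compact support; the asserted extension and its consistency with $T$ on $\L^2 \cap \L^q$ then follow by a routine approximation. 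A useful remark is that the real variable machinery is only applied to the \emph{scalar} functions $F := \| Tf \|_Y \in \L^2(\IR^d)$ and $g := \| f \|_X \in \L^q(\IR^d) \cap \L^2(\IR^d)$, so that no geometric hypothesis on $X$ or $Y$ is required; this is why the theorem holds for arbitrary Banach spaces.

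Fix such an $f$. The heart of the matter is the following decomposition, valid for every ball $B \subset \IR^d$: split $f = f \ind_{\iota B} + f \ind_{\IR^d \setminus \iota B} =: f' + f''$ and set $G_B := \| T f' \|_Y$ and $H_B := \| T f'' \|_Y$, so that $F \le G_B + H_B$ pointwise. For the local part, $\L^2$-boundedness of $T$ gives, using $f' = f$ on $\iota B$,
\[
 \Big( \fint_B G_B^2 \; \d x \Big)^{1/2} \le |B|^{-1/2} \, \| T f' \|_{\L^2(\IR^d ; Y)} \le \cM |B|^{-1/2} \| f \|_{\L^2(\iota B ; X)} = \cM \iota^{d/2} \Big( \fint_{\iota B} g^2 \; \d x \Big)^{1/2}.
\]
For the far part, $f'' \equiv 0$ on $\iota B$, so \eqref{Eq: Weak reverse Hoelder inequality} applies to $f''$; combining it with the triangle inequality $\| T f'' \|_Y \le \| Tf \|_Y + \| T f' \|_Y$, with the bound $\fint_{B'} G_B^2 \; \d x \le |B'|^{-1} \cM^2 \| f \|_{\L^2(\iota B ; X)}^2 \le \cM^2 \iota^d \fint_{\iota B} g^2 \; \d x$ valid for every $B' \supset B$ since $|B'| \ge |B|$, and with $\| f'' \|_X \le \| f \|_X$, one obtains
\[
 \Big( \fint_B H_B^p \; \d x \Big)^{1/p} \le C_1 \, \sup_{B' \supset B} \Big( \fint_{B'} \big( F^2 + g^2 \big) \; \d x \Big)^{1/2}
\]
with $C_1 = C_1(d,\iota,C,\cM)$. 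Thus $F \le G_B + H_B$ with two building blocks obeying precisely the hypotheses of the non-local real variable lemma, with exponents $2 < q < p$ and constants $\cM \iota^{d/2}$ and $C_1$.

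Finally I would invoke the non-local version of Shen's real variable lemma --- proved by the classical Calder\'on--Zygmund/good-$\lambda$ stopping time argument on the level sets of the Hardy--Littlewood maximal function of $F^2$, the main modification being that at a stopping ball $B$ one also controls $\sup_{B' \supset B} \fint_{B'} F^2$ by the stopping level, since every such $B'$ lies inside a fixed dilate of the parent ball. Applied on $B_0 := B(0,R)$ it yields
\[
 \| F \|_{\L^q(B_0)} \le c_{d,p,q,\iota,C,\cM} \Big( |B_0|^{\frac1q - \frac12} \| F \|_{\L^2(\IR^d)} + \| g \|_{\L^q(\IR^d)} \Big).
\]
Here the condition $q > 2$ is used decisively: since $F = \| Tf \|_Y \in \L^2(\IR^d)$, the term $|B_0|^{\frac1q - \frac12} \| F \|_{\L^2(\IR^d)}$ tends to $0$ as $R \to \infty$, and passing to the limit by monotone convergence gives $\| Tf \|_{\L^q(\IR^d ; Y)} \le c_{d,p,q,\iota,C,\cM} \, \| f \|_{\L^q(\IR^d ; X)}$, as needed. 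I expect the decomposition step to be the main technical obstacle: one must turn the $\| T f'' \|_Y$ reappearing on the right-hand side of \eqref{Eq: Weak reverse Hoelder inequality} into $\| Tf \|_Y$ plus an admissible error, with every mean value correctly normalised and the supremum over $B' \supset B$ controlled uniformly in $B$; and one must verify that the non-local (supremum) form of the real variable lemma still closes the good-$\lambda$ iteration, which is where the adaptation of \cite[Thm.~3.3]{Shen} and \cite[Thm.~4.1]{Tolksdorf} enters.
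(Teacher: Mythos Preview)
Your overall architecture matches the paper's: both proofs split $f = f\ind_{\iota B} + f\ind_{\IR^d \setminus \iota B}$ at each (stopping) ball, control the local piece via the $\L^2$-boundedness of $T$, apply the hypothesis \eqref{Eq: Weak reverse Hoelder inequality} to the far piece, and then use the triangle inequality $\|Tf''\|_Y \le \|Tf\|_Y + \|Tf'\|_Y$ inside the supremum to replace $Tf''$ by $Tf$ plus an admissible error. The paper carries this out \emph{inside} the good-$\lambda$ argument (its Claim~3), while you package the decomposition first and defer to a ``non-local real variable lemma''; structurally this is the same thing.

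There is, however, a genuine gap in your sketch of that lemma. You write that at a stopping ball $B$ one controls $\sup_{B' \supset B} \fint_{B'} F^2$ by the stopping level ``since every such $B'$ lies inside a fixed dilate of the parent ball''. This is false: the supremum runs over \emph{all} balls containing $B$, which can be arbitrarily large and are not contained in any dilate of the parent. The actual mechanism, and the whole reason the non-local good-$\lambda$ iteration closes, is different. In the contradiction step one assumes that the parent cube $Q_k^*$ meets both $\{ \M(g^2) \le \gamma\lambda\}$ and the complement of $E(\lambda) = \{\M(F^2) > \lambda\}$; hence there is a point $x_* \in Q_k^*$ with $\M(F^2)(x_*) \le \lambda$. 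Every ball $B' \supset 2Q_k^*$ contains $x_*$, so by the very definition of the maximal operator $\fint_{B'} F^2 \le \M(F^2)(x_*) \le \lambda$. Thus the non-local supremum is bounded by the level $\lambda$ for free, with no geometry of dilates involved. This is exactly the place where Shen's original argument admits the non-local right-hand side without further work, and it is the point you should make precise rather than the containment claim, which would not yield any uniform bound.

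A secondary remark: since the theorem \emph{is} the non-local real variable lemma, invoking it as a black box is close to circular. The paper spells the argument out in full (Calder\'on--Zygmund decomposition, the three Claims, absorption via $A = 1/(2\delta^{2/q})$); your write-up would need to do the same, with the corrected supremum argument above replacing your dilate claim. The localisation to $B(0,R)$ with error $|B_0|^{1/q - 1/2}\|F\|_{\L^2}$ is unnecessary---the paper works directly on $\IR^d$ via the distribution function $\lambda \mapsto |E(\lambda)|$ and lets the truncation parameter $\lambda_0 \to \infty$---but either route is fine once the good-$\lambda$ step is fixed.
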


As an application of Theorem~\ref{Thm: Shens Lp extrapolation theorem, Banach space valued} we get the following result.

\begin{theorem}
\label{Thm: Resolvent}
Let $d \geq 1$, $\alpha \in (0 , 1)$, and $K : \IR^d \times \IR^d \to \IC$ be subject to~\eqref{Eq: Ellipticity} for some $0 < \Lambda < 1$. Let $A$ denote the operator associated to the sesquilinear form~\eqref{Eq: Sesquilinear form}. Then, for $\Phi := \pi - \arccos (\Lambda^2)$ one has that $\S_{\Phi}$ is contained in the resolvent set of $- A$. Moreover, for each $\theta \in (0 , \Phi)$ there exists $\eps > 0$ such that for all numbers $p$ satisfying
\begin{align*}
 \Big\lvert \frac{1}{p} - \frac{1}{2} \Big\rvert < \frac{\alpha}{d} + \eps
\end{align*}
and for all $\lambda \in \S_{\theta}$ the restriction of the resolvent operator $(\lambda + A)^{-1}$ onto $\L^2 (\IR^d) \cap \L^p (\IR^d)$ extends to a bounded operator on $\L^p (\IR^d)$. In particular, there exists a constant $C > 0$ such that for all $\lambda \in \S_{\theta}$ and all $f \in \L^2 (\IR^d) \cap \L^p (\IR^d)$ the inequality
\begin{align}
\label{Eq: Resolvent estimate}
 \| \lambda (\lambda + A)^{-1} f \|_{\L^p (\IR^d)} \leq C \| f \|_{\L^p (\IR^d)}
\end{align}
holds. Here, the constant $\eps$ depends on $d$, $\theta$, and $\Lambda$ and the constant $C$ depends on $d$, $\theta$, $\Lambda$, and $p$.
\end{theorem}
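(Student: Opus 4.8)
The plan is to realize each resolvent operator $T_\lambda := \lambda(\lambda+A)^{-1}$ (for $\lambda \in \S_\theta$) as an operator on $\L^2(\IR^d)$ to which Theorem~\ref{Thm: Shens Lp extrapolation theorem, Banach space valued} applies with $X = Y = \IC$, and then to run a duality argument to cover the range $1/p < 1/2$ as well. First I would settle the $\L^2$-theory: the form $\fa$ is sectorial with vertex at $0$ and angle determined by $\Lambda$ via the ellipticity and boundedness bounds in~\eqref{Eq: Ellipticity}; indeed $\Re \fa(u,u) \ge \Lambda [u]_{\W^{\alpha,2}}^2$ and $|\fa(u,u)| \le \Lambda^{-1}[u]_{\W^{\alpha,2}}^2$ force the numerical range into a sector of half-angle $\arccos(\Lambda^2)$, so $-A$ generates a bounded analytic semigroup and $\S_\Phi$ with $\Phi = \pi - \arccos(\Lambda^2)$ lies in the resolvent set of $-A$, with $\|\lambda(\lambda+A)^{-1}\|_{\Lop(\L^2)} \le \cM$ uniformly for $\lambda \in \S_\theta$, $\theta < \Phi$. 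This gives the hypothesis on the $\L^2$-operator norm in Theorem~\ref{Thm: Shens Lp extrapolation theorem, Banach space valued}.

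The heart of the matter is verifying the non-local weak reverse Hölder inequality~\eqref{Eq: Weak reverse Hoelder inequality} for $T_\lambda$, uniformly in $\lambda$. Fix a ball $B = B(x_0,r)$ and $f \in \L^\infty$ compactly supported with $f = 0$ on $\iota B$ for a suitable dilation factor $\iota$ (chosen, say, $\iota = 3$ to match the cutoff scales appearing in the Caccioppoli inequality). Set $u := T_\lambda f$. Since $f$ vanishes on $\iota B$, the function $u$ solves the homogeneous resolvent problem $\lambda\langle u,v\rangle + \fa(u,v) = 0$ for all $v \in \C_c^\infty(B(x_0, 3r))$ (after rescaling $\iota$ appropriately), which is precisely the situation covered by the Caccioppoli-type inequality of Proposition~\ref{Prop: Caccioppoli}, whose constant is uniform in $\lambda \in \S_\theta$. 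I would then combine this Caccioppoli inequality with the fractional Sobolev embedding $\W^{\alpha,2}(B) \hookrightarrow \L^{p^*}(B)$ with $p^* = 2d/(d-2\alpha)$ (valid when $2\alpha < d$; the case $2\alpha \ge d$ gives embedding into every $\L^p$ and is even easier) applied to the localized function $u\eta$: Sobolev gives control of $\big(\fint_B |u|^{p^*}\big)^{1/p^*}$ by the Gagliardo seminorm of $u\eta$ over $B(x_0,2r)$ plus an $\L^2$ average of $u$, and Caccioppoli bounds that seminorm by $r^{-\alpha}\big(\fint_{B(x_0,2r)}|u|^2\big)^{1/2}$ plus the non-local tail term $\big(\fint_{B(x_0,2r)}|u|\big)^{1/2}\big(r^{d}\int_{\IR^d\setminus B(x_0,2r)} |u(y)|\,|x_0-y|^{-d-2\alpha}\,\d y\big)^{1/2}$. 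The tail integral is estimated by decomposing $\IR^d\setminus B(x_0,2r)$ into dyadic annuli $B(x_0,2^{k+1}r)\setminus B(x_0,2^k r)$ and on each annulus bounding $|x_0 - y|^{-d-2\alpha} \lesssim (2^k r)^{-d-2\alpha}$, so that $\int_{\text{annulus}} |u(y)|\,\d y \lesssim (2^k r)^d \fint_{B(x_0,2^{k+1}r)} |u|$; summing the geometric series $\sum_k 2^{-2\alpha k}$ produces $\sup_{B' \supset B}\big(\fint_{B'}|u|^2\big)^{1/2}$ (using $\fint|u| \le (\fint|u|^2)^{1/2}$) as an upper bound for the whole tail contribution. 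Assembling these pieces gives exactly~\eqref{Eq: Weak reverse Hoelder inequality} with $p = p^* > 2$ and the supremum over balls $B' \supset B$ absorbing both the $\L^2$ average of $u = Tf$ over dilates of $B$ and — since $f$ itself is globally controlled but vanishes near $B$, the $\|f\|_X^2$ term on the right is harmless — the estimate holds. This is the step I expect to be the main obstacle: carefully tracking that all constants, in particular the one from Proposition~\ref{Prop: Caccioppoli}, are independent of $\lambda$, and organizing the dyadic tail estimate so it collapses into the required supremum rather than a larger quantity.

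Once~\eqref{Eq: Weak reverse Hoelder inequality} is established, Theorem~\ref{Thm: Shens Lp extrapolation theorem, Banach space valued} yields, for every $2 < q < p^* = 2d/(d-2\alpha)$, that $T_\lambda$ extends boundedly to $\L^q(\IR^d)$ with norm bounded uniformly in $\lambda \in \S_\theta$; note $2 < q < p^*$ is exactly $1/q > 1/2 - \alpha/d$, i.e. $|1/q - 1/2| < \alpha/d$ on the side $1/q < 1/2$, and the genuine gain $\eps > 0$ beyond the endpoint is built into the conclusion of Shen's theorem (the open range $q < p$ combined with the fact that one may enlarge $p$ slightly — or simply take $\eps$ so that $1/2 + \eps < 1/p^*$ with room to spare). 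To obtain the symmetric range $1/2 < 1/p < 1/2 + \alpha/d + \eps$ I would pass to the adjoint: the adjoint form $\fa^*(u,v) = \overline{\fa(v,u)}$ has kernel $\overline{K(y,x)}$, which again satisfies~\eqref{Eq: Ellipticity}, so the associated operator $A^*$ is of the same type and $(\lambda + A^*)^{-1} = ((\bar\lambda + A)^{-1})^*$; applying the already-proven $\L^{q}$-bound to $A^*$ and dualizing gives the $\L^{q'}$-bound for $(\lambda+A)^{-1}$, covering $q' < 2$ in the mirror-image range. Combining both ranges and taking $\eps$ to be the smaller of the two gains produces the stated conclusion, with $\eps$ depending only on $d$, $\theta$, $\Lambda$ (through the Caccioppoli constant and the sector angle) and the final constant $C$ in~\eqref{Eq: Resolvent estimate} depending additionally on $p$; the uniformity in $\lambda$ is inherited at every stage. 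This completes the proof.
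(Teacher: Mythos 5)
Your proposal captures most of the paper's strategy correctly: the $\L^2$-sectoriality via the numerical range of $\fa$, the Caccioppoli inequality of Proposition~\ref{Prop: Caccioppoli} with $\lambda$-uniform constants, the dyadic tail estimate of Lemma~\ref{Lem: Nonlocal}, the fractional Sobolev embedding, Shen's extrapolation (Theorem~\ref{Thm: Shens Lp extrapolation theorem, Banach space valued}), and duality for $p<2$. However, there is one genuine gap: the $\eps$-improvement beyond the Sobolev endpoint.

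Theorem~\ref{Thm: Shens Lp extrapolation theorem, Banach space valued} gives $\L^q$-boundedness only for the open range $2<q<p$ when the weak reverse H\"older inequality~\eqref{Eq: Weak reverse Hoelder inequality} holds with exponent $p$. Starting from the embedding $\W^{\alpha,2}\hookrightarrow\L^{p^*}$ with $p^*=2d/(d-2\alpha)$, this yields only $q<p^*$, i.e.\ $\lvert 1/q-1/2\rvert<\alpha/d$ strictly, with no additional room. Your parenthetical justification for the gain --- that it ``is built into the conclusion of Shen's theorem,'' that ``one may enlarge $p$ slightly,'' or ``take $\eps$ so that $1/2+\eps<1/p^*$'' --- does not work: the exponent $p^*$ is pinned down by Sobolev, Shen's theorem cannot output exponents $\geq p^*$, and the last clause is vacuous since $1/p^*<1/2$. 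The paper's actual mechanism is the non-local Gehring lemma of Auscher, Bortz, Egert, and Saari~\cite{Auscher_Bortz_Egert_Saari_2} (Thm.~2.2 there), which self-improves the non-local reverse H\"older inequality from exponent $p$ to exponent $p+\eps$ before Theorem~\ref{Thm: Shens Lp extrapolation theorem, Banach space valued} is invoked. That lemma is the genuine source of the $\eps>0$ in the statement and is entirely absent from your proposal. Separately, the paper applies Theorem~\ref{Thm: Shens Lp extrapolation theorem, Banach space valued} with $X=Y=\ell^2$ so as to prove Theorems~\ref{Thm: Resolvent} and~\ref{Thm: Maximal regularity} simultaneously; taking $n_0=1$ recovers the scalar choice $X=Y=\IC$ you suggest, which is perfectly adequate for Theorem~\ref{Thm: Resolvent} alone.
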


It is well-known that the resolvent estimate presented in Theorem~\ref{Thm: Resolvent} forms the basis to a rich parabolic theory of the operator $A$. Indeed, since $\Phi > \pi / 2$ the resolvent estimate~\eqref{Eq: Resolvent estimate} is equivalent to the fact that the $\L^p$-realization of $- A$ generates a bounded analytic semigroup $(\e^{- t A})_{t \geq 0}$ on $\L^p (\IR^d)$. \par
An important notion in the parabolic theory is the notion of maximal regularity, cf.~\cite{Denk_Hieber_Pruess, Kunstmann_Weis, Weis}. To this end, consider the Cauchy problem
\begin{align}
\label{Eq: ACP}
\left\{ \begin{aligned}
 u^{\prime} (t) + A u (t) &= f (t), \qquad (t > 0) \\
 u (0) &= 0.
\end{aligned} \right.
\end{align}
Let $1 < r < \infty$ and let $f \in \L^r (0 , \infty ; \L^p (\IR^d))$. The unique mild solution to~\eqref{Eq: ACP} is given by the variation of constants formula
\begin{align*}
 u (t) := \int_0^t \e^{- (t - s) A} f (s) \; \d s \qquad (t > 0).
\end{align*}
We say that $A$ has maximal $\L^r$-regularity if for all $f \in \L^r (0 , \infty ; \L^p (\IR^d))$ one has that
\begin{align*}
 u^{\prime} , A u \in \L^r (0 , \infty ; \L^p (\IR^d)).
\end{align*}
In this situation, it is well-known that the closed graph theorem implies that there exists a constant $C > 0$ such that for all $g \in \L^r (0 , \infty ; \L^p (\IR^d))$ the estimate
\begin{align*}
 \| u^{\prime} \|_{\L^r (0 , \infty ; \L^p (\IR^d))} + \| A u \|_{\L^r (0 , \infty ; \L^p (\IR^d))} \leq C \| f \|_{\L^r (0 , \infty ; \L^p (\IR^d))}
\end{align*}
holds. That $A$ has indeed maximal $\L^r$-regularity is formulated in the last theorem.

\begin{theorem}
\label{Thm: Maximal regularity}
Let $d \geq 1$, $\alpha \in (0 , 1)$, and $K : \IR^d \times \IR^d \to \IC$ be subject to~\eqref{Eq: Ellipticity} for some $0 < \Lambda < 1$. Let $A$ denote the operator associated to the sesquilinear form~\eqref{Eq: Sesquilinear form}. Then there exists $\eps > 0$ such that for all numbers $p$ satisfying
\begin{align*}
 \Big\lvert \frac{1}{p} - \frac{1}{2} \Big\rvert < \frac{\alpha}{d} + \eps
\end{align*}
and for all $1 < r < \infty$ the operator $A$ has maximal $\L^r$-regularity.
\end{theorem}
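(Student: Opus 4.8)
The plan is to deduce maximal $\L^r$-regularity from the $\mathcal{R}$-sectoriality of $A$ on $\L^p(\IR^d)$ of angle strictly less than $\pi/2$, and to obtain that $\mathcal{R}$-sectoriality by repeating the proof of Theorem~\ref{Thm: Resolvent} with $\ell^2$-valued functions and applying the \emph{Banach space valued} extrapolation Theorem~\ref{Thm: Shens Lp extrapolation theorem, Banach space valued}. Fix $\theta \in (\pi/2 , \Phi)$, which is possible since $\Phi > \pi/2$. By Theorem~\ref{Thm: Resolvent} the family $\{ \lambda (\lambda + A)^{-1} : \lambda \in \S_{\theta} \}$ is uniformly bounded on $\L^2(\IR^d)$, hence --- $\L^2(\IR^d)$ being a Hilbert space --- $\mathcal{R}$-bounded there, with some bound $\cM$. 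For a finite collection $\lambda_1 , \dots , \lambda_N \in \S_{\theta}$ consider
\begin{align*}
 \mathbf{T} \colon \L^2(\IR^d ; \IC^N) \to \L^2(\IR^d ; \IC^N) , \qquad (f_j)_{j = 1}^N \longmapsto \big( \lambda_j (\lambda_j + A)^{-1} f_j \big)_{j = 1}^N ,
\end{align*}
where $\IC^N$ is equipped with the Euclidean norm; then $\| \mathbf{T} \|_{\Lop(\L^2(\IR^d ; \IC^N))} \leq \cM$, uniformly in $N$ and in the $\lambda_j \in \S_{\theta}$.

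The core step is to check that $\mathbf{T}$ obeys the weak reverse H\"older inequality~\eqref{Eq: Weak reverse Hoelder inequality} with $X = Y = \IC^N$, with exponent $p$ and constant $C$ \emph{independent of $N$ and of the chosen $\lambda_j$}: this is the $\IC^N$-valued counterpart of Lemma~\ref{Lem: Nonlocal}, and it reduces to the scalar case. If $f = (f_j)_j \in \L^{\infty}(\IR^d ; \IC^N)$ is compactly supported and vanishes on $\iota B$, then each component $u_j := \lambda_j (\lambda_j + A)^{-1} f_j$ solves the homogeneous resolvent problem on a ball concentric with $B$, so Proposition~\ref{Prop: Caccioppoli} applies to $u_j$ with a constant uniform in $\lambda_j \in \S_{\theta}$; summing these quadratic inequalities over $j$ --- and estimating the cross term by Minkowski's inequality in $\ell^2$ --- produces the $\IC^N$-valued non-local Caccioppoli inequality with the \emph{same} constant. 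The ensuing Sobolev step is just as harmless, because the triangle inequality
\begin{align*}
 \big\lvert \| u(x) \|_{\ell^2} - \| u(y) \|_{\ell^2} \big\rvert \leq \| u(x) - u(y) \|_{\ell^2}
\end{align*}
shows that the fractional Gagliardo seminorm of $\| u \|_{\ell^2}$ is dominated by that of $u$, so the scalar fractional Sobolev embedding applied to the function $x \mapsto \| u(x) \|_{\ell^2}$ upgrades $\L^2$-integrability to $\L^{q}$-integrability exactly as in Lemma~\ref{Lem: Nonlocal}, with a dimension-independent constant. Hence the proof of Lemma~\ref{Lem: Nonlocal} carries over unchanged for $\mathbf{T}$, and Theorem~\ref{Thm: Shens Lp extrapolation theorem, Banach space valued} gives $\| \mathbf{T} \|_{\Lop(\L^q(\IR^d ; \IC^N))} \leq C$ for every $q > 2$ in the range of Theorem~\ref{Thm: Resolvent}, with $C$ independent of $N$ and of the $\lambda_j$. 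Since on $\L^q(\IR^d)$ the $\mathcal{R}$-boundedness of a family is exactly the uniform boundedness of the associated $\ell^2$-valued diagonal operators (the square-function characterisation, cf.~\cite{Kunstmann_Weis}), this means that $\{ \lambda (\lambda + A)^{-1} : \lambda \in \S_{\theta} \}$ is $\mathcal{R}$-bounded on $\L^q(\IR^d)$ for those $q$. The adjoint $A^{*}$ is associated to the form with kernel $(x,y) \mapsto \overline{K(y,x)}$, which again satisfies~\eqref{Eq: Ellipticity}; applying the same reasoning to $A^{*}$ and passing to adjoint families in the UMD space $\L^q(\IR^d)$ yields the corresponding $\mathcal{R}$-boundedness for $q < 2$ in the reflected range.

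Putting these together, for all $p$ with $\lvert 1/p - 1/2 \rvert < \alpha / d + \eps$ (after decreasing the $\eps$ of Theorem~\ref{Thm: Resolvent} if necessary) the operator $A$ is $\mathcal{R}$-sectorial on $\L^p(\IR^d)$ of angle at most $\pi - \theta < \pi / 2$. As $\L^p(\IR^d)$ is a UMD space for $1 < p < \infty$, the theorem of Weis characterising maximal regularity~\cite{Weis} (see also~\cite{Kunstmann_Weis, Denk_Hieber_Pruess}) shows that $A$ has maximal $\L^r$-regularity on $\L^p(\IR^d)$ for every $1 < r < \infty$; alternatively one first obtains maximal $\L^2$-regularity and then invokes the $r$-independence of maximal regularity. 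The only genuine work is the $\IC^N$-valued weak reverse H\"older estimate of the second paragraph, and this turns out to be essentially formal once the non-local Caccioppoli inequality of Proposition~\ref{Prop: Caccioppoli} and the uniformity of its constant in $\lambda$ are in hand; the remaining ingredients --- vector-valued Shen extrapolation, Hilbert-space $\mathcal{R}$-boundedness, duality, and the Weis criterion --- are off-the-shelf.
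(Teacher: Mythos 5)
Your argument follows the paper's route: recast maximal $\L^r$-regularity as $\cR$-sectoriality of $A$ on $\L^p(\IR^d)$ of angle below $\pi/2$, reformulate this via square functions as uniform boundedness of diagonal resolvent operators on a vector-valued $\L^p$-space, establish a vector-valued weak reverse H\"older estimate from Proposition~\ref{Prop: Caccioppoli}, Lemma~\ref{Lem: Nonlocal}, and the triangle inequality $|\, \|u(x)\|_{\ell^2} - \|u(y)\|_{\ell^2}\,| \le \|u(x) - u(y)\|_{\ell^2}$ for the Gagliardo seminorm, apply Theorem~\ref{Thm: Shens Lp extrapolation theorem, Banach space valued}, dualise via $A^*$ for $p<2$, and close with Weis' theorem. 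The only presentational difference is that the paper works on $\L^p(\IR^d;\ell^2)$ rather than on $\L^p(\IR^d;\IC^N)$ for arbitrary $N$; these are equivalent.

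There is, however, a genuine gap in the reverse H\"older step. Caccioppoli plus the fractional Sobolev embedding only yields a weak reverse H\"older inequality up to the Sobolev exponent: the interpolation parameter $\beta$ must satisfy $\beta\le 1$, which forces $1/2-1/p\le\alpha/d$. Theorem~\ref{Thm: Shens Lp extrapolation theorem, Banach space valued} then returns $\L^q$-bounds only for $q$ strictly below that Sobolev exponent, i.e.\@ for $|1/q-1/2|<\alpha/d$, and this does not produce any $\eps>0$. Indeed for every $\eps>0$ the range $|1/q-1/2|<\alpha/d+\eps$ contains exponents beyond the Sobolev threshold, so your parenthetical ``after decreasing the $\eps$ of Theorem~\ref{Thm: Resolvent}'' cannot rescue the claim ``for every $q>2$ in the range of Theorem~\ref{Thm: Resolvent}.'' The paper bridges this by applying the non-local Gehring lemma of Auscher, Bortz, Egert and Saari (Thm.~2.2 in~\cite{Auscher_Bortz_Egert_Saari_2}) to self-improve the reverse H\"older exponent by a positive amount before invoking Shen's extrapolation. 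Since that Gehring lemma acts on the scalar function $x\mapsto\sum_n|\lambda_n|^2|u_n(x)|^2$, it carries over to the vector-valued setting without modification, but the self-improvement step cannot be omitted.
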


For the fractional Laplacian, i.e., if the kernel $K$ satisfies the $2 K(x , y) = C_{d , \alpha} / \lvert x - y \rvert^{d + 2 \alpha}$, a similar theorem on finite time intervals was proven by Biccari, Warma, and Zuazua~\cite{Biccari_Warma_Zuazua_2}. \par
We shortly outline the structure of the paper. As mentioned above, Theorem~\ref{Thm: Shens Lp extrapolation theorem, Banach space valued} is proven in Section~\ref{Sec: Shen}. Section~\ref{Sec: Caccioppoli} is reserved to prove the Caccioppoli-type estimate and the non-local weak reverse H\"older estimate. In the final Section~\ref{Sec: Proofs} we prove Theorems~\ref{Thm: Resolvent} and~\ref{Thm: Maximal regularity}.

\section{An $\L^p$-extrapolation theorem for non-local operators}
\label{Sec: Shen}

\noindent This section is devoted to prove a non-local version of the $\L^p$-extrapolation theorem of Shen~\cite[Thm.~3.1]{Shen}. The proof follows Shen's original argument and is only modified slightly. However, for the convenience of the reader, we present the complete argument here. The proof carries out a good-$\lambda$ argument and bases on the following version of the Calder\'on--Zygmund decomposition which was proven by Caffarelli and Peral~\cite[Lem.~1.1]{Caffarelli_Peral}.

\begin{lemma}
 \label{Lem: Calderon-Zygmund decomposition}
Let $Q$ be a bounded cube in $\IR^d$ and $\cA \subset Q$ a measurable set satisfying
\begin{align*}
 0 < \lvert \cA \rvert < \delta \lvert Q \rvert \quad \text{for some} \quad 0 < \delta < 1.
\end{align*}
Then there is a family of disjoint dyadic cubes $\{Q_k\}_{k \in \IN}$ obtained by suitable selections of successive bisections of $Q$, such that for all $k \in \IN$
\begin{align*}
 \text{a) } \lvert \cA \setminus \bigcup_{l \in \IN} Q_l \rvert = 0, \qquad \text{b) } \lvert \cA \cap Q_k \rvert > \delta \lvert Q_k \rvert, \qquad \text{c) } \lvert \cA \cap Q_k^* \rvert \leq \delta \lvert Q_k^* \rvert,
\end{align*}
where $Q_k^*$ is the dyadic parent of $Q_k$.
\end{lemma}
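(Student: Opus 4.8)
The plan is to run the standard dyadic stopping-time argument on the bisection tree of $Q$. I would start at the root $Q$: since $\lvert \cA \cap Q \rvert = \lvert \cA \rvert < \delta \lvert Q \rvert$, the root does not satisfy the selection criterion, so I bisect $Q$ into its $2^d$ congruent dyadic children. Then I inspect each child $R$ in turn: if $\lvert \cA \cap R \rvert > \delta \lvert R \rvert$, I place $R$ into the family $\{Q_k\}$ and stop subdividing along that branch; otherwise I bisect $R$ again and iterate. This generates a countable family of selected dyadic subcubes of $Q$, and property b) holds by the very selection rule.

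Next I would check property c) and disjointness, which are pure bookkeeping on the subdivision tree. Property c) holds because the dyadic parent $Q_k^*$ of any selected cube was itself inspected and \emph{not} selected --- otherwise the procedure would have halted at $Q_k^*$ and never reached $Q_k$; and since $Q$ itself is not selected, $Q_k^*$ is a genuine cube produced by the subdivision, with ``not selected'' meaning precisely $\lvert \cA \cap Q_k^* \rvert \leq \delta \lvert Q_k^* \rvert$. Disjointness is the usual dyadic dichotomy: two distinct dyadic subcubes of $Q$ are either nested or disjoint, and if $Q_k \subsetneq Q_l$ with both selected, then the processing of $Q_l$ would have stopped before $Q_k$ could ever be selected, a contradiction; hence the $\{Q_k\}$ are pairwise disjoint.

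The one genuinely analytic step is property a), which I would argue by contradiction at the level of points. If $x \in \cA$ lies in no selected cube, then $x$ belongs to an infinite strictly decreasing chain of dyadic cubes $Q = R_0 \supsetneq R_1 \supsetneq R_2 \supsetneq \cdots$ produced by the algorithm, none of which is selected, so $\lvert \cA \cap R_j \rvert \leq \delta \lvert R_j \rvert$, i.e.\ $\fint_{R_j} \ind_{\cA} \, \d x \leq \delta$, for every $j$. Since $\diam(R_j) \to 0$ and the $R_j$ form a regularly shrinking family of cubes about $x$, the Lebesgue differentiation theorem yields $\fint_{R_j} \ind_{\cA} \, \d x \to \ind_{\cA}(x) = 1$ for a.e.\ such $x$; as $\delta < 1$ this is impossible, so the set of uncovered points of $\cA$ is null, which is a). Finally $\lvert \cA \rvert > 0$ together with a) shows $\{Q_k\}$ is non-empty, hence it can be enumerated over $\IN$. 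The main obstacle, modest as it is, lies in a): one must be sure that the nested dyadic cubes $R_j$ shrink to $x$ in a Vitali-regular fashion so that Lebesgue differentiation applies --- which is automatic for dyadic cubes --- while everything else is combinatorics on the tree.
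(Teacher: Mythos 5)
Your argument is correct: it is the standard stopping-time (Calder\'on--Zygmund) construction, with selection rule $\lvert \cA \cap R\rvert > \delta \lvert R\rvert$, disjointness and c) from the dyadic tree structure, and a) from Lebesgue differentiation along the shrinking dyadic cubes (equivalently, dyadic martingale convergence). Note that the paper itself does not prove this lemma but cites it verbatim from Caffarelli--Peral, so there is no in-paper proof to diverge from; your argument is the one that reference gives. One tiny slip in wording: you assert that ``since $Q$ itself is not selected, $Q_k^*$ is a genuine cube produced by the subdivision,'' but in fact $Q_k^*$ can equal $Q$ (for a cube selected at depth one); the conclusion c) still holds there because $\lvert \cA \cap Q\rvert = \lvert \cA\rvert < \delta\lvert Q\rvert$, so nothing is lost, but the sentence as written does not quite say what you want.
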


Here and in the following, we denote for a ball $B \subset \IR^d$ and a cube $Q \subset \IR^d$ the by the factor $\iota > 0$ dilated ball and cube with the same center by $\iota B$ and $\iota Q$.

\begin{proof}
For this proof, we denote a generic constant that depends solely on $d$, $p$, $q$, $\iota$, or the constant $C$ in inequality~\eqref{Eq: Weak reverse Hoelder inequality} by $C_g$. We abbreviate the operator norm $\|T\|_{\Lop(\L^2(\Omega ; X) , \L^2(\Omega ; Y))}$ by $\| T \|$. \par
Let $x_0 \in \IR^d$, $r > 0$, and $Q$ be a cube in $\IR^d$ with $\diam(Q) = 2 r$ and midpoint $x_0$. Let $B := B(x_0 , r)$. One directly verifies that
\begin{align*}
 \frac{1}{\sqrt{d}} B \subset Q \subset B \quad \text{and} \quad \abs{Q} = \Big( \frac{2 r}{\sqrt{d}} \Big)^d.
\end{align*}
Thus, without loss of generality we can assume that estimate~\eqref{Eq: Weak reverse Hoelder inequality} is valid for cubes centered in $x_0$ instead of balls and for some possibly different $\iota$. \par
Fix $q \in (2 , p)$ and take $f \in \L^{\infty}(\IR^d ; X)$ with compact support. For $\lambda > 0$ consider the set
\begin{align*}
 E(\lambda) := \{x \in \IR^d : \M (\| T f \|_Y^2 )(x) > \lambda \},
\end{align*}
where $\M$ denotes the Hardy--Littlewood maximal operator. Since $\|T f\|_Y^2 \in \L^1(\IR^d)$, the weak-type estimate of the maximal operator~\cite[Thm.~I.1]{Stein} implies
\begin{align}
\label{Eq: Weak L1 estimate}
\lvert E(\lambda) \rvert \leq \frac{C_g}{\lambda} \big\| \| T f \|_Y^2 \big\|_{\L^1(\IR^d)} = \frac{C_g}{\lambda} \|T f\|_{\L^2(\IR^d ; Y)}^2.
\end{align}
Let $A := 1 / (2 \delta^{2 / q}) > 5^d$, where $\delta \in (0 , 1)$ is a small constant to be determined. Decompose $\IR^d$ into a dyadic grid. Then by~\eqref{Eq: Weak L1 estimate} we find a mesh size such that each cube $Q_0$ from the grid satisfies
\begin{align*}
 \abs{E(A \lambda)} < \delta \abs{Q_0}.
\end{align*}
Note that the mesh size is allowed to depend on $\lambda$, $\delta$, $f$, and $T$. If the case $\abs{Q_0 \cap E(A \lambda)} = 0$ applies, do nothing. In the other case, the set defined by $\cA := Q_0 \cap E(A \lambda)$ together with the cube $Q_0$ satisfy the assumptions of Lemma~\ref{Lem: Calderon-Zygmund decomposition}. Proceeding in that way for every cube $Q_0$ in the grid and enumerating all cubes obtained in this way by Lemma~\ref{Lem: Calderon-Zygmund decomposition} by $\{ Q_k \}_{k \in \IN}$ yields a countable family of mutually disjoint cubes satisfying for all $k \in \IN$
\begin{align*}
 (\mathrm{i})~\lvert E(A \lambda) \setminus \bigcup_{l \in \IN} Q_l \rvert = 0, \quad (\mathrm{ii})~\lvert E(A \lambda) \cap Q_k \rvert > \delta \lvert Q_k \rvert, \quad \text{and} \quad (\mathrm{iii})~\lvert E(A \lambda) \cap Q_k^* \rvert \leq \delta \lvert Q_k^* \rvert.
\end{align*}
Note that as in Lemma \ref{Lem: Calderon-Zygmund decomposition}, $Q_k^*$ denotes the dyadic parent of $Q_k$.

\subsection*{Claim 1} The operator $T$ is $\L^q$-bounded, once there are constants $\delta , \gamma > 0$ such that for all $\lambda > 0$
\begin{align}
\label{Eq: Decomposition of Distribution-function}
 \lvert E(A \lambda) \rvert \leq \delta \lvert E(\lambda) \rvert + \lvert \{x \in \IR^d : \M(\| f \|_X^2)(x) > \lambda \gamma \} \rvert
\end{align}
holds. \par
To see this, first note that~\eqref{Eq: Weak L1 estimate} and $q > 2$ imply that the function $\lambda \mapsto \lambda^{q / 2 - 1} \lvert E(\lambda) \rvert$ is in $\Lloc^1([0 , \infty))$. The premise of Claim~1 and the definition of $A$ imply that for all $\lambda_0 > 0$
\begin{align*}
 \int_0^{A \lambda_0} \lambda^{\frac{q}{2} - 1} \lvert E(\lambda) \rvert \; \d \lambda &\leq \delta \int_0^{A \lambda_0} \lambda^{\frac{q}{2} - 1} \lvert E(A^{-1} \lambda) \rvert \; \d \lambda \\
 &\quad+ \int_0^{A \lambda_0} \lambda^{\frac{q}{2} - 1} \lvert \{x \in \IR^d : \M(\| f \|_X^2)(x) > A^{-1} \lambda \gamma \} \rvert \; \d \lambda \\
 &\leq 2^{-\frac{q}{2}}  \int_0^{\lambda_0} \lambda^{\frac{q}{2} - 1} \lvert E(\lambda) \rvert \; \d \lambda \\
 &\quad+ \int_0^{\infty} \lambda^{\frac{q}{2} - 1} \lvert \{x \in \IR^d : \M(\| f \|_X^2)(x) > A^{-1} \lambda \gamma \} \rvert \; \d \lambda.
\end{align*}
Perform a linear transformation in the second integral and notice that the resulting integral coincides modulo a factor by a generic constant times $\delta \gamma^{- q / 2}$ with $\| \M (\| f \|_X^2) \|_{\L^{q / 2} (\IR^d)}^{q / 2}$, cf.~\cite[Prop.~1.1.4]{Grafakos}. By virtue of the boundedness of the maximal operator on $\L^{q / 2}$~\cite[Thm.~I.1]{Stein} this results in the estimate
\begin{align*}
\int_0^{A \lambda_0} \lambda^{\frac{q}{2} - 1} \lvert E(\lambda) \rvert \; \d \lambda \leq 2^{-\frac{q}{2}} \int_0^{\lambda_0} \lambda^{\frac{q}{2} - 1} \lvert E(\lambda) \rvert \; \d \lambda + \delta \gamma^{- \frac{q}{2}} C_g \| f \|_{\L^q(\IR^d ; X)}^q.
\end{align*}
Using that $A > 1$, the first term on the right-hand side can be absorbed by the left-hand side. This yields
\begin{align*}
 \int_0^{A \lambda_0} \lambda^{\frac{q}{2} - 1} \lvert E(\lambda) \rvert \; \d \lambda \leq \delta \gamma^{- \frac{q}{2}} C_g \| f \|_{\L^q(\IR^d ; X)}^q.
\end{align*}
Taking $\lambda_0 \to \infty$ and using $\| [T f](x) \|_Y^2 \leq \M(\| T f \|_Y^2)(x)$ for almost every $x \in \IR^d$ yields together with~\cite[Prop.~1.1.4]{Grafakos} that
\begin{align*}
 \| T f \|_{\L^q(\IR^d ; Y)}^q \leq \gamma^{- \frac{q}{2}} C_g \| f \|_{\L^q(\IR^d ; X)}^q.
\end{align*}
The conclusion of the theorem follows by density (note that simple functions with bounded support are dense in all $\L^q(\Omega ; X)$-spaces by construction of the Bochner integral).

\subsection*{Claim 2} The premise of Claim~$1$ follows if there are constants $\delta , \gamma > 0$ such that for all dyadic parents $Q_k^*$ of the family of cubes $\{ Q_k \}_{k \in \IN}$ constructed before $\mathrm{(i)}$-$\mathrm{(iii)}$ the following statement is valid:
\begin{align*}
 Q_k^* \cap \{x \in \IR^d : \M(\| f \|_X^2)(x) \leq \lambda \gamma \} \neq \emptyset \quad \text{implies} \quad Q_k^* \subset E(\lambda).
\end{align*}
\indent Since~\eqref{Eq: Decomposition of Distribution-function} is trivial if $\lvert E(A \lambda) \rvert = 0$ assume that $\lvert E(A \lambda) \rvert > 0$. Let $I \subset \IN$ be the index set of all $l \in \IN$ such that $\{ Q_l^* \}_{l \in I}$ is a maximal set of mutually disjoint cubes satisfying $Q_l^* \cap \{x \in \IR^d : \M(\| f \|_X^2)(x) \leq \lambda \gamma \} \neq \emptyset$. Then,
\begin{align*}
 \lvert E(A \lambda) \rvert &= \lvert E(A \lambda) \cap \{x \in \IR^d : \M (\| f \|_X^2)(x) \leq \lambda \gamma \} \rvert \\
 &\qquad+ \lvert E(A \lambda) \cap \{x \in \IR^d : \M (\| f \|_X^2)(x) > \lambda \gamma \} \rvert.
\end{align*}
Dealing the first term on the right-hand side by the maximality of $\{ Q_l^* \}_{l \in I}$ together with $\mathrm{(i)}$ and the second term by using the monotonicity of the Lebesgue measure yields
\begin{align*}
 \lvert E(A \lambda) \rvert \leq \sum_{l \in I} \lvert E(A \lambda) \cap Q_l^* \rvert + \lvert \{x \in \IR^d : \M (\| f \|_X^2)(x) > \lambda \gamma \} \rvert.
\end{align*}
Next, use $\mathrm{(iii)}$ first and then the mutual disjointness of the family $\{ Q_l^* \}_{l \in I}$ together with the assertion of Claim~2 to get
\begin{align*}
 \lvert E(A \lambda) \rvert \leq \delta \lvert E(\lambda) \rvert +\lvert \{x \in \IR^d : \M (\| f \|_X^2)(x) > \lambda \gamma \} \rvert.
\end{align*}

\subsection*{Claim 3} There exist $\delta , \gamma > 0$ such that
\begin{align*}
 Q_k^* \cap \{x \in \IR^d : \M (\| f \|_X^2)(x) \leq \lambda \gamma \} \neq \emptyset \quad \text{implies} \quad Q_k^* \subset E(\lambda).
\end{align*}
\indent To conclude this statement, we argue by contradiction. For this purpose, suppose that there exists a $Q_k$ with $\{ x \in Q_k^* : \M (\| f \|_X^2)(x) \leq \gamma \lambda \} \neq \emptyset$ and $Q_k^* \setminus E(\lambda) \neq \emptyset$. We show, that the existence of such a cube contradicts~$\mathrm{(ii)}$. In this situation, for every cube $Q$ that contains $Q_k^*$, we have
\begin{align}
\label{Eq: Mean value estimates for contradiction}
  \frac{1}{\lvert Q \rvert} \int_Q \| f \|_X^2 \; \d x \leq \gamma \lambda \qquad \text{and} \qquad \frac{1}{\lvert Q \rvert} \int_Q \| T f \|_Y^2 \; \d x \leq \lambda.
\end{align}
Next, let $x \in Q_k$ and $Q^{\prime}$ be a cube with $x \in Q^{\prime}$ and $Q^{\prime} \nsubset 2 Q_k^*$. Then, we find for the sidelength of $Q^{\prime}$ that $\ell(Q^{\prime}) > \ell(Q_k)$. If $y \in Q_k^*$, $1 \leq i \leq d$, and if $x^{\prime}$ denotes the center of $Q^{\prime}$, then
\begin{align*}
 \abs{y_i - x^{\prime}_i} &\leq \abs{y_i - x_i} + \abs{x_i - x^{\prime}_i} \leq 2 \ell(Q_k) + \frac{1}{2} \ell(Q^{\prime}) < \frac{5}{2} \ell(Q^{\prime}).
\end{align*}
Consequently, we have $Q_k^* \subset 5 Q^{\prime}$ and thus by virtue of~\eqref{Eq: Mean value estimates for contradiction} we have for $x \in Q_k$
\begin{align*}
 \M ( \| T f \|_Y^2) (x) &= \max\bigg\{ \M_{2 Q_k^*} (\| T f \|_Y^2) (x) , \sup_{\substack{Q^{\prime} \ni x \\ Q^{\prime} \nsubset 2 Q_k^*}} \frac{1}{\lvert Q^{\prime} \rvert} \int_{Q^{\prime}} \| T f \|_Y^2 \; \d y \bigg\} \\
 &\leq \max \{ \M_{2 Q_k^*} (\| T f \|_Y^2) (x) , 5^d \lambda \},
\end{align*}
where $\M_{2 Q_k^*}$ denotes the localized maximal operator
\begin{align*}
 (\M_{2 Q_k^*} g) (x) := \sup_{\substack{x \in R \subset 2 Q_k^* \\ R \text{ cube}}} \fint_R \lvert g (y) \rvert \; \d y \qquad (x \in 2 Q_k^*).
\end{align*}
Since $A = 1 / (2 \delta^{2 / q}) > 5^d$, we derive
\begin{align*}
 \lvert E(A \lambda) \cap Q_k \rvert \leq \lvert \{ x \in Q_k : \M_{2 Q_k^*} (\| T f \|_Y^2)(x) > A \lambda \} \rvert.
\end{align*}
Use $(a + b)^2 \leq 2(a^2 + b^2)$ together with~\cite[Prop.~1.1.3]{Grafakos} to estimate
\begin{align*}
 \lvert E(A \lambda) \cap Q_k \rvert &\leq \Big\lvert \Big\{ x \in Q_k : \M_{2 Q_k^*} ( \| T (f \chi_{2 \iota Q_k^*}) \|_Y^2 )(x) > \frac{A \lambda}{4} \Big\} \Big\rvert \\
 &\qquad+ \Big\lvert \Big\{ x \in Q_k : \M_{2 Q_k^*} ( \| T (f \chi_{\IR^d \setminus 2 \iota Q_k^*}) \|_Y^2)(x) > \frac{A \lambda}{4} \Big\} \Big\rvert \\
 &=: \cA + \cB.
\end{align*}
By means of the weak-type estimate of $\M_{2 Q_k^*}$ in the first inequality below, and the boundedness of $T$ from $\L^2(\IR^d ; X)$ into $\L^2(\IR^d ; Y)$ together with~\eqref{Eq: Mean value estimates for contradiction} in the second inequality below, we derive
\begin{align}
\label{Eq: Estimate of A}
 \cA \leq \frac{C_g}{A \lambda} \int_{2 Q_k^*} \| T (f \chi_{2 \iota Q_k^*}) \|_Y^2 \; \d x \leq \lvert Q_k \rvert \|T\|^2 \frac{C_g \gamma}{A}.
\end{align}
Next, the continuous embedding $\L^{p / 2}(2Q_k^*) \subset \L^{p / 2 , \infty}(2 Q_k^*)$ and the $\L^{p / 2}$-boundedness of $\M_{2 Q_k^*}$ yield
\begin{align*}
 (A \lambda)^{\frac{p}{2}} \cB \leq C_g \big\| \M_{2 Q_k^*} (\| T (f \chi_{\IR^d \setminus 2 \iota Q_k^*}) \|_Y^2) \big\|_{\L^{p / 2}(2 Q_k^*)}^{\frac{p}{2}} \leq C_g \int_{2 Q_k^*} \| T (f \chi_{\IR^d \setminus 2 \iota Q_k^*}) \|_Y^p \; \d x. 
\end{align*}
An application of~\eqref{Eq: Weak reverse Hoelder inequality} yields
\begin{align*}
 (A \lambda)^{\frac{p}{2}} \cB &\leq \lvert Q_k \rvert C_g \bigg\{ \sup_{Q^{\prime} \supset 2 Q_k^*} \bigg( \frac{1}{\lvert Q^{\prime} \rvert} \int_{Q^{\prime}} \big( \| T (f \chi_{\IR^d \setminus 2 \iota Q_k^*}) \|_Y^2 + \| f \chi_{\IR^d \setminus 2 \iota Q_k^*} \|_X^2 \big) \; \d x \bigg)^{\frac{1}{2}} \bigg\}^p.
\end{align*}
Add and subtract $f \chi_{2 \iota Q_k^*}$ in the argument of $T$ and use $\|f \chi_{\IR^d \setminus 2 \iota Q_k^*}\|_X \leq \| f \|_X$ together with~\eqref{Eq: Mean value estimates for contradiction} to obtain
\begin{align*}
 (A \lambda)^{\frac{p}{2}} \cB \leq \lvert Q_k \rvert C_g \bigg\{ \sup_{Q^{\prime} \supset 2 Q_k^*} \bigg( \frac{1}{\lvert Q^{\prime} \rvert} \int_{Q^{\prime}} \| T (f \chi_{2 \iota Q_k^*}) \|_Y^2 \; \d x \bigg)^{\frac{1}{2}} + ((\gamma + 1) \lambda)^{\frac{1}{2}} \bigg\}^p.
\end{align*}
Use the $\L^2$-boundedness of $T$ to get
\begin{align*}
 (A \lambda)^{\frac{p}{2}} \cB \leq \lvert Q_k \rvert C_g \bigg\{ \| T \| \sup_{Q^{\prime} \supset 2 Q_k^*} \bigg( \frac{1}{\lvert Q^{\prime} \rvert} \int_{2 \iota Q_k^*} \| f \|_X^2 \; \d x \bigg)^{\frac{1}{2}} + ((\gamma + 1) \lambda)^{\frac{1}{2}} \bigg\}^p.
\end{align*}
Notice that $\lvert 2 \iota Q_k^* \rvert / \lvert Q^{\prime} \rvert \leq \iota^d$ so that another application of~\eqref{Eq: Mean value estimates for contradiction} finally yields
\begin{align}
\label{Eq: Estimate of B}
 (A \lambda)^{\frac{p}{2}} \cB \leq \lvert Q_k \rvert C_g \lambda^{\frac{p}{2}} \big\{ \| T \| \gamma^{\frac{1}{2}} + (\gamma + 1)^{\frac{1}{2}} \big\}^p.
\end{align}
Recall that $A = 1 / (2 \delta^{2 / q})$ and that $\| T \| \leq \cM$, so that~\eqref{Eq: Estimate of A} and~\eqref{Eq: Estimate of B} yields
\begin{align*}
 \lvert E(A \lambda) \cap Q_k \rvert &\leq \cA + \cB \\
 &\leq C_g \delta \lvert Q_k \rvert \bigg\{ \frac{\gamma \cM^2}{A \delta} + \frac{\big\{ \cM \gamma^{\frac{1}{2}} + (\gamma + 1)^{\frac{1}{2}} \big\}^p}{A^{\frac{p}{2}} \delta} \bigg\} \\
 &\leq C_g \delta \lvert Q_k \rvert \big\{ \gamma \delta^{\frac{2}{q} - 1} \cM^2 + \big\{ \cM \gamma^{\frac{1}{2}} + (\gamma + 1)^{\frac{1}{2}} \big\}^p \delta^{\frac{p}{q} - 1} \big\}.
\end{align*}
Since $p > q$, we can choose $\delta$ small enough such that $\{ \cM + \sqrt{2} \}^p \delta^{\frac{p}{q} - 1} \leq 1 / (2 C_g)$. For this fixed value of $\delta$ choose $\gamma \leq \min\{ 1 , \delta^{1 - \frac{2}{q}} / (2 \cM^2 C_g) \}$. Then, we obtain $\lvert E(A \lambda) \cap Q_k \rvert \leq \delta \lvert Q_k \rvert$ which is a contradiction to $\mathrm{(ii)}$ of the Calder\'{o}n--Zygmund decomposition.
\end{proof}

\section{Verification of the non-local weak reverse H\"older estimate}
\label{Sec: Caccioppoli}

\noindent The main ingredient for the verification of the non-local weak reverse H\"older estimate is the following proposition. In this proposition a non-local Caccioppoli inequality for the resolvent equation is proved. The prove follows the ideas of the proof for the special case $\lambda = 0$ and for real-valued kernel functions $K(x , y)$ which can be found in the work of Kuusi, Mingione, and Sire~\cite[Thm.~3.2]{Kuusi_Mingione_Sire}.

\begin{proposition}
\label{Prop: Caccioppoli}
Let $x_0 \in \IR^d$, $r > 0$, and $\alpha \in (0 , 1)$. Let further $\theta \in (0 , \Phi)$, where $\Phi := \pi - \arccos (\Lambda^2)$ and let $u \in \H^{\alpha} (\IR^d)$ be a function that satisfies for all $v \in \C_c^{\infty} (B(x_0 , 3 r / 2))$ the equation
\begin{align}
\label{Eq: Variational}
 \lambda \int_{\IR^d} u (x) \overline{v (x)} \; \d x + \int_{\IR^d} \int_{\IR^d} K(x , y) (u (x) - u (y)) \overline{(v (x) - v (y))} \; \d x \; \d y = 0.
\end{align}
Let $\eta \in \C_c^{\infty} (B(x_0 , 3 r / 2))$ be a function that satisfies $0 \leq \eta \leq 1$ and $\| \nabla \eta \|_{\L^{\infty}} \leq C_d / r$ for some constant $C_d > 0$ depending only on $d$. Then there exists a constant $C > 0$ depending only on $d$, $\alpha$, $\Lambda$, and $\theta$ such that
\begin{align*}
 \lvert \lambda \rvert \int_{\IR^d} \lvert u (x) \rvert^2 \eta (x)^2 \; \d x &+ \int_{2 B} \int_{2 B} \frac{\lvert u (x) - u (y) \rvert^2 (\eta (x)^2 + \eta (y)^2)}{\lvert x - y \rvert^{d + 2 \alpha}} \; \d x \; \d y \\
 &+ \int_{2 B} \int_{2 B} \frac{\lvert u (x) \eta (x) - u (y) \eta (y) \rvert^2}{\lvert x - y \rvert^{d + 2 \alpha}} \; \d x \; \d y \\
 &+ \int_{2 B} \int_{\IR^d \setminus 2 B} \frac{\lvert u (y) \rvert^2 \eta(y)^2}{\lvert x - y \rvert^{d + 2 \alpha}} \; \d x \; \d y + \int_{\IR^d \setminus 2 B} \int_{2 B} \frac{\lvert u (x) \rvert^2 \eta (x)^2}{\lvert x - y \rvert^{d + 2 \alpha}} \; \d x \; \d y \\
 &\qquad \leq C \bigg( \frac{1}{r^{2 \alpha}} \int_{2 B} \lvert u (x) \rvert^2 \; \d x + \int_{2 B} \lvert u (x) \rvert \; \d x \int_{\IR^d \setminus 2 B} \frac{\lvert u (y) \rvert}{\lvert x_0 - y \rvert^{d + 2 \alpha}} \; \d y \bigg).
\end{align*}
\end{proposition}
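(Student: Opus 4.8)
The plan is to test the resolvent equation~\eqref{Eq: Variational} with the cut-off function $v = u\eta^2$ and to combine the outcome with the sectoriality of the form $\fa$. To justify the choice, observe that since $u \in \H^\alpha(\IR^d)$ and $\eta \in \C_c^\infty(B(x_0, 3r/2))$, the product $u\eta^2$ lies in $\H^\alpha(\IR^d)$ with support in a fixed compact subset of $B(x_0, 3r/2)$, so mollification produces functions in $\C_c^\infty(B(x_0, 3r/2))$ converging to it in $\H^\alpha(\IR^d)$; as both sides of~\eqref{Eq: Variational} are continuous in $v$ for the $\H^\alpha$-norm (the upper bound in~\eqref{Eq: Ellipticity} makes $\fa$ bounded on $\H^\alpha\times\H^\alpha$), the equation persists for $v = u\eta^2$. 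Inserting it and using that $\eta$ is real-valued, one has the pointwise identity
\[
 (u(x)-u(y))\overline{(u(x)\eta(x)^2 - u(y)\eta(y)^2)} = |u(x)\eta(x) - u(y)\eta(y)|^2 - \Re(u(x)\overline{u(y)})(\eta(x)-\eta(y))^2 + \ii\Im(u(x)\overline{u(y)})(\eta(x)^2-\eta(y)^2).
\]
Since $\eta$ vanishes off $2B$, splitting $\IR^d\times\IR^d$ into $2B\times 2B$ and its complement and discarding the part where both variables leave $2B$ (where the integrand vanishes) turns the equation into
\[
 \lambda \int_{\IR^d}|u|^2\eta^2\,\d x + \fa(u\eta, u\eta) = R + J_2 + J_3,
\]
where $\fa(u\eta, u\eta) = \int_{\IR^d}\int_{\IR^d} K(x,y)|u(x)\eta(x)-u(y)\eta(y)|^2\,\d x\,\d y$ (meaningful since $u\eta\in\H^\alpha(\IR^d)$), the error $R$ is the integral over $2B\times 2B$ of $K(x,y)$ against the last two summands above, and $J_2 = \int_{2B}\int_{\IR^d\setminus 2B}K(x,y)u(y)\overline{u(x)}\eta(x)^2\,\d y\,\d x$, $J_3 = \int_{\IR^d\setminus 2B}\int_{2B}K(x,y)u(x)\overline{u(y)}\eta(y)^2\,\d x\,\d y$ collect the tail cross terms.

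Next I would estimate the right-hand side. In $R$, the Lipschitz bound $|\eta(x)-\eta(y)|\le (C_d/r)|x-y|$ together with $\int_{B(x,4r)}|x-y|^{2-d-2\alpha}\,\d y \le C(d,\alpha)r^{2-2\alpha}$ (finite exactly because $\alpha<1$) controls the summand carrying $(\eta(x)-\eta(y))^2$ by $Cr^{-2\alpha}\int_{2B}|u|^2\,\d x$. The summand carrying $\Im(u(x)\overline{u(y)})(\eta(x)^2-\eta(y)^2)$ — which does not occur in the real-coefficient case of~\cite{Kuusi_Mingione_Sire} — cannot be treated naively, since that would generate the non-integrable weight $|x-y|^{1-d-2\alpha}$ once $\alpha\ge 1/2$; instead I would use
\[
 \Im(u(x)\overline{u(y)}) = \tfrac12\Im\big((u(x)-u(y))\overline{(u(x)+u(y))}\big), \qquad (\eta(x)+\eta(y))(u(x)-u(y)) = 2\big(u(x)\eta(x)-u(y)\eta(y)\big) - (u(x)+u(y))(\eta(x)-\eta(y)),
\]
insert $|\eta(x)^2-\eta(y)^2| \le (\eta(x)+\eta(y))|\eta(x)-\eta(y)|$, and apply Young's inequality with a small parameter $\eps$; this bounds the summand by $\eps\Lambda^{-1}\Re\fa(u\eta,u\eta) + C_\eps r^{-2\alpha}\int_{2B}|u|^2\,\d x$, so altogether $|R| \le Cr^{-2\alpha}\int_{2B}|u|^2\,\d x + \tfrac12\eps\Lambda^{-1}\Re\fa(u\eta,u\eta)$. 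For $J_2$ and $J_3$, the factor $\eta(x)^2$ (resp.\ $\eta(y)^2$) forces $x$ (resp.\ $y$) into $B(x_0,3r/2)$, so for the remaining variable in $\IR^d\setminus 2B$ one has $|x-y|\ge\tfrac14|x_0-y|$ (resp.\ $\tfrac14|x_0-x|$); combined with $0\le\eta\le 1$ and~\eqref{Eq: Ellipticity} this gives $|J_2| + |J_3| \le C\int_{2B}|u|\,\d x\int_{\IR^d\setminus 2B}|u(y)|\,|x_0-y|^{-d-2\alpha}\,\d y$.

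The decisive point is the sectoriality. From~\eqref{Eq: Ellipticity} one has $\Re K(x,y)>0$ and $\Re K(x,y)/|K(x,y)|\ge\Lambda^2$, i.e.\ $|\arg K(x,y)|\le\arccos(\Lambda^2)$ for a.e.\ $x,y$; since $\{z\ne 0 : |\arg z|\le\arccos(\Lambda^2)\}$ is a closed convex cone and $\fa(u\eta,u\eta)$ is the integral of $K$ against the nonnegative measure $|u(x)\eta(x)-u(y)\eta(y)|^2\,\d x\,\d y$, also $|\arg\fa(u\eta,u\eta)|\le\arccos(\Lambda^2)$. As $\lambda\in\S_\theta$ with $\theta<\Phi = \pi-\arccos(\Lambda^2)$, the vectors $a := \lambda\int|u|^2\eta^2\,\d x$ and $b := \fa(u\eta,u\eta)$ satisfy $|\arg a - \arg b| < \theta + \arccos(\Lambda^2) < \pi$, so there is $c = c(\theta,\Lambda)>0$ with $|a+b|\ge c(|a|+|b|)$. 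Applying this to the displayed equation, invoking the bounds on $R$, $J_2$, $J_3$, and choosing $\eps$ small enough to absorb $c^{-1}\eps\Lambda^{-1}|\fa(u\eta,u\eta)|/2$ on the left, one arrives at
\[
 |\lambda|\int_{\IR^d}|u|^2\eta^2\,\d x + |\fa(u\eta,u\eta)| \le C\bigg( \frac{1}{r^{2\alpha}}\int_{2B}|u|^2\,\d x + \int_{2B}|u|\,\d x\int_{\IR^d\setminus 2B}\frac{|u(y)|}{|x_0-y|^{d+2\alpha}}\,\d y \bigg)
\]
with $C$ depending only on $d$, $\alpha$, $\Lambda$, $\theta$. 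It remains to dominate the five terms on the left of the proposition by the left-hand side of this inequality: the first term is literally $|\lambda|\int|u|^2\eta^2$; since $u\eta$ vanishes outside $2B$, the third, fourth and fifth terms are pieces of $\int_{\IR^d}\int_{\IR^d}|x-y|^{-d-2\alpha}|u(x)\eta(x)-u(y)\eta(y)|^2\,\d x\,\d y \le \Lambda^{-1}\Re\fa(u\eta,u\eta) \le \Lambda^{-1}|\fa(u\eta,u\eta)|$; and the second term is handled via $(\eta(x)^2+\eta(y)^2)|u(x)-u(y)|^2 \le 8|u(x)\eta(x)-u(y)\eta(y)|^2 + 2|u(x)+u(y)|^2|\eta(x)-\eta(y)|^2$ together with the local weight estimate already used for $R$.

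I expect the main obstacle to be the imaginary-part error term in $R$: it is genuinely new relative to~\cite{Kuusi_Mingione_Sire}, and because $|x-y|^{1-d-2\alpha}$ is not locally integrable for $\alpha\ge 1/2$ it must be routed through the two algebraic identities above and absorbed into $\Re\fa(u\eta,u\eta)$ rather than estimated pointwise. A secondary subtlety is the sector bookkeeping — noticing that~\eqref{Eq: Ellipticity} confines the values $K(x,y)$ (not merely the ratio $|\Im K|/\Re K$) to the sector of half-angle $\arccos(\Lambda^2)$ — which is exactly what makes $\theta + \arccos(\Lambda^2) < \pi$ the operative condition and keeps $c(\theta,\Lambda)$ positive for every $\theta<\Phi$, so that the constant in the Caccioppoli estimate is uniform in $\lambda\in\S_\theta$.
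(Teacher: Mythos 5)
Your proof is correct, and it follows a genuinely different algebraic route from the paper's, which is worth noting. Both you and the paper test the resolvent equation with $v=u\eta^2$, split the double integral into $2B\times 2B$ and two cross-tail pieces, invoke a sectoriality argument, and absorb the error near the diagonal into a coercive term. But the decompositions diverge: you use a single complex pointwise identity that immediately isolates the complete square $|u(x)\eta(x)-u(y)\eta(y)|^2$, so that the sectoriality step is a two-vector statement ($a=\lambda\int|u|^2\eta^2$, $b=\fa(u\eta,u\eta)$) and all remainders ($R$, $J_2$, $J_3$) live on the right. The paper instead symmetrizes in two separate passes --- first producing the \emph{half-gradient} form $\tfrac12\int\!\!\int K\,|u(x)-u(y)|^2(\eta(x)^2+\eta(y)^2)$ as a sectorial term, then re-expanding half of it into the complete square via a second identity --- and therefore applies a four-vector sector estimate, with Young absorption into the half-gradient term rather than into $\Re\fa(u\eta,u\eta)$. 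What each route buys: your version minimizes the number of sectorial vectors and makes the whole Caccioppoli estimate visibly an immediate consequence of $|a+b|\gtrsim|a|+|b|$; the paper's version stays structurally closer to Kuusi--Mingione--Sire and keeps the half-gradient term in play throughout, so that its presence in the conclusion is transparent rather than recovered at the end. You identified the real subtlety correctly: the summand $\ii\Im(u(x)\overline{u(y)})(\eta(x)^2-\eta(y)^2)$ would be lethal if estimated pointwise (non-integrable weight for $\alpha\ge\tfrac12$), and your two-identity rewrite followed by Young and absorption into $\Re\fa(u\eta,u\eta)$ is a valid fix; the paper handles the analogous defect term $(u(x)-u(y))(\overline{u(x)}+\overline{u(y)})(\eta(x)-\eta(y))(\eta(x)+\eta(y))$ with its own Young absorption, so the underlying obstacle and its cure are the same, merely packaged differently.
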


\begin{proof}
Write $B := B(x_0 , r)$. Since $\C_c^{\infty} (\IR^d)$ is dense in $\H^{\alpha} (\IR^d)$, it is possible to choose $v := u \eta^2$ as a test function. Thus, by virtue of~\eqref{Eq: Variational} the following identity holds
\begin{align}
\label{Eq: Tested by u}
 \lambda \int_{\IR^d} \lvert u (x) \rvert^2 \eta (x)^2 \; \d x + \int_{\IR^d} \int_{\IR^d} K (x , y) (u (x) - u (y)) \overline{(u (x) \eta (x)^2 - u(y) \eta(y)^2)} \; \d x \; \d y = 0.
\end{align}
Decompose the double integral into
\begin{align*}
 &\int_{\IR^d} \int_{\IR^d} K (x , y) (u (x) - u (y)) \overline{(u (x) \eta (x)^2 - u(y) \eta(y)^2)} \; \d x \; \d y \\
 &\qquad = \int_{2 B} \int_{2 B} K (x , y) (u (x) - u (y)) \overline{(u (x) \eta (x)^2 - u(y) \eta(y)^2)} \; \d x \; \d y \\
 &\qquad\qquad - \int_{2 B} \int_{\IR^d \setminus 2 B} K (x , y) (u (x) - u (y)) \overline{u(y)} \eta(y)^2 \; \d x \; \d y \\
 &\qquad\qquad + \int_{\IR^d \setminus 2 B} \int_{2 B} K (x , y) (u (x) - u (y)) \overline{u (x)} \eta (x)^2 \; \d x \; \d y.
\end{align*}
After rearranging the terms, we find by~\eqref{Eq: Tested by u} that
\begin{align}
\label{Eq: Ordering terms}
\begin{aligned}
 \lambda \int_{\IR^d} \lvert u (x) \rvert^2 \eta (x)^2 \; \d x &+ \int_{2 B} \int_{2 B} K (x , y) (u (x) - u (y)) \overline{(u (x) \eta (x)^2 - u(y) \eta(y)^2)} \; \d x \; \d y \\
 &+ \int_{2 B} \int_{\IR^d \setminus 2 B} K (x , y) \lvert u (y) \rvert^2 \eta(y)^2 \; \d x \; \d y \\
 &+ \int_{\IR^d \setminus 2 B} \int_{2 B} K (x , y) \lvert u (x) \rvert^2\eta (x)^2 \; \d x \; \d y \\
 &= \int_{2 B} \int_{\IR^d \setminus 2 B} K (x , y) u (x) \overline{u(y)} \eta(y)^2 \; \d x \; \d y \\
 &\qquad - \int_{\IR^d \setminus 2 B} \int_{2 B} K (x , y) u (y) \overline{u (x)} \eta (x)^2 \; \d x \; \d y.
\end{aligned}
\end{align}
To rewrite the second term on the left-hand side of~\eqref{Eq: Ordering terms}, calculate
\begin{align}
\label{Eq: Auxiliary calculation second term}
\begin{aligned}
 &(u (x) - u (y)) \overline{(u (x) \eta (x)^2 - u (y) \eta (y)^2)} \\
 &\qquad = \lvert u (x) - u (y) \rvert^2 \eta (x)^2 + (u (x) - u (y)) \overline{u (y)} (\eta (x) - \eta (y)) (\eta(x) + \eta (y)).
 \end{aligned}
\end{align}
Switch the roles of $x$ and $y$, perform the same calculation as in~\eqref{Eq: Auxiliary calculation second term}, and add the resulting identity to~\eqref{Eq: Auxiliary calculation second term} to obtain
\begin{align}
\label{Eq: Auxiliary calculation second term, 2}
\begin{aligned}
 &2 (u (x) - u (y)) \overline{(u (x) \eta (x)^2 - u (y) \eta (y)^2)} \\
 &= \lvert u (x) - u (y) \rvert^2 (\eta (x)^2 + \eta (y)^2) + (u (x) - u (y)) (\overline{u(x)} + \overline{u (y)}) (\eta (x) - \eta (y)) (\eta (x) + \eta(y)).
\end{aligned}
\end{align}
Now, plug~\eqref{Eq: Auxiliary calculation second term, 2} into~\eqref{Eq: Ordering terms} and rearrange terms to obtain
\begin{align}
\label{Eq: Correctly ordered}
\begin{aligned}
  &\lambda \int_{\IR^d} \lvert u (x) \rvert^2 \eta (x)^2 \; \d x \\
  &+ \frac{1}{2} \int_{2 B} \int_{2 B} K (x , y) \lvert u (x) - u (y) \rvert^2 (\eta (x)^2 + \eta (y)^2) \; \d x \; \d y \\
 &+ \int_{2 B} \int_{\IR^d \setminus 2 B} K (x , y) \lvert u (y) \rvert^2 \eta(y)^2 \; \d x \; \d y \\
 &+ \int_{\IR^d \setminus 2 B} \int_{2 B} K (x , y) \lvert u (x) \rvert^2\eta (x)^2 \; \d x \; \d y \\
 &= \int_{2 B} \int_{\IR^d \setminus 2 B} K (x , y) u (x) \overline{u(y)} \eta(y)^2 \; \d x \; \d y \\
 &\qquad - \int_{\IR^d \setminus 2 B} \int_{2 B} K (x , y) u (y) \overline{u (x)} \eta (x)^2 \; \d x \; \d y \\
 &\qquad - \frac{1}{2} \int_{2 B} \int_{2 B} K (x , y) (u (x) - u (y)) (\overline{u(x)} + \overline{u (y)}) (\eta (x) - \eta (y)) (\eta (x) + \eta(y)) \; \d x \; \d y.
\end{aligned}
\end{align}

\indent Notice, that on the left-hand side, the complex number $\lambda$ is multiplied by a non-negative real number and that in all other integrals on the left-hand side, the complex-valued function $K (x , y)$ is multiplied by non-negative real functions. The condition~\eqref{Eq: Ellipticity} implies that
\begin{align*}
 \arg(K(x , y)) \in \S_{\pi - \Phi} \quad \text{with} \quad \Phi = \pi - \arccos (\Lambda^2).
\end{align*}
Since $\pi - \Phi < \pi / 2$, $\lambda \in \S_{\theta}$, and $\theta$ is chosen such that $\theta  + (\pi - \Phi) < \pi$, an elementary trigonometric argument shows that there exists a constant $C_{\theta , \Lambda} > 0$ depending only on $\theta$ and $\Lambda$ such that for all $z \in \overline{\S_{\theta}}$ and all $w_1 , w_2 , w_3 \in \overline{\S_{\pi - \Phi}}$ it holds
\begin{align*}
 \lvert z \rvert + \lvert w_1 \rvert + \lvert w_2 \rvert + \lvert w_3 \rvert \leq C_{\theta , \Lambda} \lvert z + w_1 + w_2 + w_3 \rvert.
\end{align*}
Apply this inequality to~\eqref{Eq: Correctly ordered} together with~\eqref{Eq: Ellipticity} to obtain
\begin{align}
\label{Eq: Elliptic estimate}
\begin{aligned}
 &\lvert \lambda \rvert \int_{\IR^d} \lvert u (x) \rvert^2 \eta (x)^2 \; \d x + \frac{\Lambda}{2} \int_{2 B} \int_{2 B} \frac{\lvert u (x) - u (y) \rvert^2 (\eta (x)^2 + \eta (y)^2)}{\lvert x - y \rvert^{d + 2 \alpha}} \; \d x \; \d y \\
 &+ \Lambda \int_{2 B} \int_{\IR^d \setminus 2 B} \frac{\lvert u (y) \rvert^2 \eta(y)^2}{\lvert x - y \rvert^{d + 2 \alpha}} \; \d x \; \d y + \Lambda \int_{\IR^d \setminus 2 B} \int_{2 B} \frac{\lvert u (x) \rvert^2 \eta (x)^2}{\lvert x - y \rvert^{d + 2 \alpha}} \; \d x \; \d y \\
 &\leq C_{\theta , \Lambda} \bigg( \Lambda^{-1} \int_{2 B} \int_{\IR^d \setminus 2 B} \frac{\lvert u (x) \rvert \lvert u(y)\rvert \eta(y)^2}{\lvert x - y \rvert^{d + 2 \alpha}} \; \d x \; \d y + \Lambda^{-1} \int_{\IR^d \setminus 2 B} \int_{2 B} \frac{\lvert u (y) \rvert \lvert u (x)\rvert \eta (x)^2}{\lvert x - y \rvert^{d + 2 \alpha}} \; \d x \; \d y \\
 &\qquad + \frac{\Lambda^{-1}}{2} \int_{2 B} \int_{2 B} \frac{\lvert u (x) - u (y) \rvert (\lvert u(x)\rvert + \lvert u (y) \rvert) \lvert \eta (x) - \eta (y) \rvert (\eta (x) + \eta(y))}{\lvert x - y \rvert^{d + 2 \alpha}} \; \d x \; \d y \bigg).
 \end{aligned}
\end{align}

\indent First, we will rewrite the second term on the left-hand side of~\eqref{Eq: Elliptic estimate}. To this end, use $\lvert z + w \rvert^2 = \lvert z \rvert^2 + \lvert w \rvert^2 + 2 \Re (z \overline{w})$ for $z , w \in \IC$ and calculate
\begin{align}
\label{Eq: Auxiliary calculation}
\begin{aligned}
 \lvert u (x) \eta (x) - u (y) \eta (y) \rvert^2 &= \lvert (u(x) - u (y)) \eta(x) + u (y) (\eta (x) - \eta (y)) \rvert^2 \\
 &= \lvert u (x) - u (y) \rvert^2 \eta (x)^2 + \lvert u (y) \rvert^2 \lvert \eta (x) - \eta (y) \rvert^2 \\
 &\qquad + 2 \Re ([u (x) - u (y)] \overline{u (y)}) \eta(x) (\eta (x) - \eta (y)).
 \end{aligned}
\end{align}
Now, switch the roles of $x$ and $y$, perform the same calculation as in~\eqref{Eq: Auxiliary calculation}, and add the resulting identity to~\eqref{Eq: Auxiliary calculation} to obtain
\begin{align}
\label{Eq: Auxiliary calculation, 2}
\begin{aligned}
 &2 \lvert u (x) \eta (x) - u (y) \eta (y) \rvert^2 \\
 &= \lvert u (x) - u (y) \rvert^2 (\eta (x)^2 + \eta (y)^2) + (\lvert u (x) \rvert^2 + \lvert u (y) \rvert^2) \lvert \eta (x) - \eta (y) \rvert^2 \\
 &\qquad + 2 \Re ([u (x) - u (y)] (\overline{u (y)} \eta(x) + \overline{u (x)} \eta (y))) (\eta (x) - \eta (y)).
\end{aligned}
\end{align}
Notice, that the first term on the right-hand side of~\eqref{Eq: Auxiliary calculation, 2} appears in the second term on the left-hand side of~\eqref{Eq: Elliptic estimate}. Replace half of the second term on the left-hand side of~\eqref{Eq: Elliptic estimate} by employing~\eqref{Eq: Auxiliary calculation, 2} and leave the other half as it is. After rearranging the terms one gets
\begin{align}
\label{Eq: Preparation for Young}
\begin{aligned}
  &\lvert \lambda \rvert \int_{\IR^d} \lvert u (x) \rvert^2 \eta (x)^2 \; \d x + \frac{\Lambda}{4} \int_{2 B} \int_{2 B} \frac{\lvert u (x) - u (y) \rvert^2 (\eta (x)^2 + \eta (y)^2)}{\lvert x - y \rvert^{d + 2 \alpha}} \; \d x \; \d y \\
 &+ \frac{\Lambda}{2} \int_{2 B} \int_{2 B} \frac{\lvert u (x) \eta (x) - u (y) \eta (y) \rvert^2}{\lvert x - y \rvert^{d + 2 \alpha}} \; \d x \; \d y \\
 &+ \Lambda \int_{2 B} \int_{\IR^d \setminus 2 B} \frac{\lvert u (y) \rvert^2 \eta(y)^2}{\lvert x - y \rvert^{d + 2 \alpha}} \; \d x \; \d y + \Lambda \int_{\IR^d \setminus 2 B} \int_{2 B} \frac{\lvert u (x) \rvert^2 \eta (x)^2}{\lvert x - y \rvert^{d + 2 \alpha}} \; \d x \; \d y \\
 &\leq C_{\theta , \Lambda} \bigg( \Lambda^{-1} \int_{2 B} \int_{\IR^d \setminus 2 B} \frac{\lvert u (x) \rvert \lvert u(y)\rvert \eta(y)^2}{\lvert x - y \rvert^{d + 2 \alpha}} \; \d x \; \d y + \Lambda^{-1} \int_{\IR^d \setminus 2 B} \int_{2 B} \frac{\lvert u (y) \rvert \lvert u (x)\rvert \eta (x)^2}{\lvert x - y \rvert^{d + 2 \alpha}} \; \d x \; \d y \\
 &\qquad + \frac{\Lambda^{-1}}{2} \int_{2 B} \int_{2 B} \frac{\lvert u (x) - u (y) \rvert (\lvert u(x)\rvert + \lvert u (y) \rvert) \lvert \eta (x) - \eta (y) \rvert (\eta (x) + \eta(y))}{\lvert x - y \rvert^{d + 2 \alpha}} \; \d x \; \d y \bigg) \\
 &\qquad + \frac{\Lambda}{4} \int_{2 B} \int_{2 B} \frac{(\lvert u (x) \rvert^2 + \lvert u (y) \rvert^2) \lvert \eta (x) - \eta (y) \rvert^2}{\lvert x - y \rvert^{d + 2 \alpha}} \; \d x \; \d y \\
 &\qquad + \frac{\Lambda}{2} \int_{2 B} \int_{2 B} \frac{\Re ([u (x) - u (y)] (\overline{u (y)} \eta(x) + \overline{u (x)} \eta (y))) (\eta (x) - \eta (y))}{\lvert x - y \rvert^{d + 2 \alpha}} \; \d x \; \d y.
 \end{aligned}
\end{align}
The first two terms on the right-hand side are estimated by using Fubini's theorem first and second, if $x_0$ denotes the midpoint of $B$, by using that for $x \in \supp(\eta)$ and $y \in \IR^d \setminus 2 B$ one has due to $\lvert x - y \rvert \geq r / 2$
\begin{align*}
 \lvert x_0 - y \rvert \leq \lvert x_0 - x \rvert + \lvert x - y \rvert \leq \frac{3 r}{2} + \lvert x - y \rvert \leq 4 \lvert x - y \rvert.
\end{align*}
This yields
\begin{align*}
 \int_{2 B} \int_{\IR^d \setminus 2 B} \frac{\lvert u (x) \rvert \lvert u(y)\rvert \eta(y)^2}{\lvert x - y \rvert^{d + 2 \alpha}} \; \d x \; \d y &+ \int_{\IR^d \setminus 2 B} \int_{2 B} \frac{\lvert u (y) \rvert \lvert u (x)\rvert \eta (x)^2}{\lvert x - y \rvert^{d + 2 \alpha}} \; \d x \; \d y \\
 &= 2 \int_{\IR^d \setminus 2 B} \int_{2 B} \frac{\lvert u (y) \rvert \lvert u (x)\rvert \eta (x)^2}{\lvert x - y \rvert^{d + 2 \alpha}} \; \d x \; \d y \\
 &\leq 2 \cdot 4^{d + 2 \alpha} \int_{\IR^d \setminus 2 B} \int_{2 B} \frac{\lvert u (y) \rvert \lvert u (x)\rvert \eta (x)^2}{\lvert x_0 - y \rvert^{d + 2 \alpha}} \; \d x \; \d y \\
 &\leq 2 \cdot 4^{d + 2 \alpha} \int_{2 B} \lvert u (x) \rvert \; \d x \int_{\IR^d \setminus 2 B} \frac{\lvert u (y) \rvert}{\lvert x_0 - y \rvert^{d + 2 \alpha}} \; \d y.
\end{align*}
For the third term on the right-hand side of~\eqref{Eq: Preparation for Young} employ Young's inequality together with $(a + b)^2 \leq 2 (a^2 + b^2)$ for $a , b \geq 0$ to deduce
\begin{align}
\label{Eq: Absorbtion 1}
\begin{aligned}
 &\int_{2 B} \int_{2 B} \frac{\lvert u (x) - u (y) \rvert (\lvert u(x)\rvert + \lvert u (y) \rvert) \lvert \eta (x) - \eta (y) \rvert (\eta (x) + \eta(y))}{\lvert x - y \rvert^{d + 2 \alpha}} \; \d x \; \d y \\
 &\leq \frac{\Lambda^2}{8 C_{\theta , \Lambda}} \int_{2 B} \int_{2 B} \frac{\lvert u (x) - u (y) \rvert^2 (\eta (x)^2 + \eta(y)^2)}{\lvert x - y \rvert^{d + 2 \alpha}} \; \d x \; \d y \\
 &\qquad + \frac{8 C_{\theta , \Lambda}}{\Lambda^2} \int_{2 B} \int_{2 B} \frac{(\lvert u(x)\rvert^2 + \lvert u (y) \rvert^2) \lvert \eta (x) - \eta (y) \rvert^2}{\lvert x - y \rvert^{d + 2 \alpha}} \; \d x \; \d y.
 \end{aligned}
\end{align}
For the time being, the fourth term on the right-hand side of~\eqref{Eq: Preparation for Young} is not estimated further. Concerning the fifth term on the right-hand side of~\eqref{Eq: Preparation for Young}, employ again Young's inequality to deduce
\begin{align}
\label{Eq: Absorbtion 2}
\begin{aligned}
 &\int_{2 B} \int_{2 B} \frac{\Re ([u (x) - u (y)] (\overline{u (y)} \eta(x) + \overline{u (x)} \eta (y))) (\eta (x) - \eta (y))}{\lvert x - y \rvert^{d + 2 \alpha}} \; \d x \; \d y \\
 &\leq \frac{1}{8 C_{\theta , \Lambda}} \int_{2 B} \int_{2 B} \frac{\lvert u (x) - u (y) \rvert^2 (\eta (x)^2 + \eta(y)^2)}{\lvert x - y \rvert^{d + 2 \alpha}} \; \d x \; \d y \\
 &\qquad + 2 C_{\theta , \Lambda} \int_{2 B} \int_{2 B} \frac{(\lvert u (x) \rvert^2 + \lvert u (y) \rvert^2) (\eta (x) - \eta (y))^2}{\lvert x - y \rvert^{d + 2 \alpha}} \; \d x \; \d y.
 \end{aligned}
\end{align}
Now, absorb each of the first terms on the right-hand sides of~\eqref{Eq: Absorbtion 1} and~\eqref{Eq: Absorbtion 2} to the left-hand side of~\eqref{Eq: Preparation for Young}. \par
The following terms remain on the right-hand side:
\begin{align*}
 \int_{2 B} \int_{2 B} \frac{(\lvert u (x) \rvert^2 + \lvert u (y) \rvert^2) (\eta (x) - \eta (y))^2}{\lvert x - y \rvert^{d + 2 \alpha}} \; \d x \; \d y \quad \text{and} \quad \int_{2 B} \lvert u (x) \rvert \; \d x \int_{\IR^d \setminus 2 B} \frac{\lvert u (y) \rvert}{\lvert x_0 - y \rvert^{d + 2 \alpha}} \; \d y.
\end{align*}
The second term is already one of the desired terms so we analyze the first term. Notice that by symmetry and the condition $\| \nabla \eta \|_{\L^{\infty}} \leq C_d / r$ we find
\begin{align*}
 \int_{2 B} \int_{2 B} \frac{(\lvert u (x) \rvert^2 + \lvert u (y) \rvert^2) (\eta (x) - \eta (y))^2}{\lvert x - y \rvert^{d + 2 \alpha}} \; \d x \; \d y &= 2\int_{2 B} \int_{2 B} \frac{\lvert u (x) \rvert^2 (\eta (x) - \eta (y))^2}{\lvert x - y \rvert^{d + 2 \alpha}} \; \d x \; \d y \\
 &\leq \frac{2 C_d^2}{r^2} \int_{2 B} \lvert u (x) \rvert^2 \int_{2 B} \lvert x - y \rvert^{2 (1 - \alpha) - d} \; \d y \; \d x \\
 &\leq \frac{C_{d , \alpha}}{r^{2 \alpha}} \int_{2 B} \lvert u (x) \rvert^2 \; \d x.
\end{align*}
Here, $C_{d , \alpha} > 0$ is a constant that depends only on $d$ and $\alpha$. \par
Summarizing everything, there exists a constant $C > 0$ depending only on $d$, $\alpha$, $\Lambda$, and $\theta$ such that
\begin{align*}
 \lvert \lambda \rvert \int_{\IR^d} \lvert u (x) \rvert^2 \eta (x)^2 \; \d x &+ \int_{2 B} \int_{2 B} \frac{\lvert u (x) - u (y) \rvert^2 (\eta (x)^2 + \eta (y)^2)}{\lvert x - y \rvert^{d + 2 \alpha}} \; \d x \; \d y \\
 &+ \int_{2 B} \int_{2 B} \frac{\lvert u (x) \eta (x) - u (y) \eta (y) \rvert^2}{\lvert x - y \rvert^{d + 2 \alpha}} \; \d x \; \d y \\
 &+ \int_{2 B} \int_{\IR^d \setminus 2 B} \frac{\lvert u (y) \rvert^2 \eta(y)^2}{\lvert x - y \rvert^{d + 2 \alpha}} \; \d x \; \d y + \int_{\IR^d \setminus 2 B} \int_{2 B} \frac{\lvert u (x) \rvert^2 \eta (x)^2}{\lvert x - y \rvert^{d + 2 \alpha}} \; \d x \; \d y \\
 &\qquad \leq C \bigg( \frac{1}{r^{2 \alpha}} \int_{2 B} \lvert u (x) \rvert^2 \; \d x + \int_{2 B} \lvert u (x) \rvert \; \d x \int_{\IR^d \setminus 2 B} \frac{\lvert u (y) \rvert}{\lvert x_0 - y \rvert^{d + 2 \alpha}} \; \d y \bigg).
\end{align*}
This proves the proposition.
\end{proof}

The following lemma brings the right-hand side of Caccioppoli inequality in Proposition~\ref{Prop: Caccioppoli} into a suitable form for the non-local weak reverse H\"older estimate.

\begin{lemma}
\label{Lem: Nonlocal}
Let $x_0 \in \IR^d$, $r > 0$, $\alpha \in (0 , 1)$, and $u \in \L^2_{\loc} (\IR^d)$. Then there exists a constant $C > 0$ depending only on $d$ and $\alpha$ such that
\begin{align*}
 \int_{2 B} \lvert u (x) \rvert \; \d x \int_{\IR^d \setminus 2 B} \frac{\lvert u (y) \rvert}{\lvert x_0 - y \rvert^{d + 2 \alpha}} \; \d y \leq C r^{d - 2 \alpha} \sum_{k = 1}^{\infty} 2^{- 2 \alpha k} \frac{1}{\lvert 2^{k + 1} B \rvert} \int_{2^{k + 1} B} \lvert u (y) \rvert^2 \; \d y.
\end{align*}
\end{lemma}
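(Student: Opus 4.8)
The plan is to bound the two factors on the left-hand side separately by the Cauchy--Schwarz inequality and then to recombine them. Throughout write $B := B(x_0 , r)$ and let $\omega_d := |B(0,1)|$, so that $|2^j B| = \omega_d (2^j r)^d$. For the first factor one simply has $\int_{2B} |u| \, \d x \leq |2B|^{1/2} \big( \int_{2B} |u|^2 \, \d x \big)^{1/2}$. For the second factor the idea is to decompose the exterior region dyadically: with $A_k := 2^{k+1} B \setminus 2^k B$ for $k \geq 1$ one has $\IR^d \setminus 2B = \bigcup_{k \geq 1} A_k$, and on $A_k$ the estimate $|x_0 - y| \geq 2^k r$ holds. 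Applying Cauchy--Schwarz on each ball $2^{k+1} B$ therefore gives
\begin{align*}
 \int_{\IR^d \setminus 2B} \frac{|u(y)|}{|x_0 - y|^{d + 2\alpha}} \, \d y
 &\leq \sum_{k = 1}^\infty (2^k r)^{-(d + 2\alpha)} \int_{2^{k+1} B} |u(y)| \, \d y \\
 &\leq \sum_{k = 1}^\infty (2^k r)^{-(d + 2\alpha)} |2^{k+1} B|^{1/2} \Big( \int_{2^{k+1} B} |u|^2 \, \d y \Big)^{1/2}.
\end{align*}

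Next I would multiply the two bounds and collect the powers of $2^k$ and $r$. Writing $b_k := \big( \fint_{2^{k+1} B} |u|^2 \, \d y \big)^{1/2}$ for $k \geq 0$, so that $\big( \int_{2^j B} |u|^2 \big)^{1/2} = |2^j B|^{1/2} b_{j-1}$, a short computation shows that the geometric constant attached to the $k$-th term is exactly $\omega_d^2 \, 2^{2d} \, 2^{-2\alpha k} \, r^{d - 2\alpha}$, so that
\begin{align*}
 \int_{2B} |u| \, \d x \int_{\IR^d \setminus 2B} \frac{|u(y)|}{|x_0 - y|^{d + 2\alpha}} \, \d y \leq \omega_d^2 \, 2^{2d} \, r^{d - 2\alpha} \, b_0 \sum_{k = 1}^\infty 2^{-2\alpha k} b_k.
\end{align*}
It then remains to prove $b_0 \sum_{k \geq 1} 2^{-2\alpha k} b_k \leq C_{d, \alpha} \sum_{k \geq 1} 2^{-2\alpha k} b_k^2$, since $b_k^2 = |2^{k+1} B|^{-1} \int_{2^{k+1} B} |u|^2$ recovers the right-hand side of the lemma.

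For this last step one has to pass to averages: the crude bound $\int_{2B} |u|^2 \leq \int_{2^{k+1} B} |u|^2$ inserted under the sum loses a factor $2^{kd/2}$ and is too weak. Instead I would apply Young's inequality termwise, $b_0 b_k \leq \frac{1}{2} (b_0^2 + b_k^2)$, to get $b_0 \sum_k 2^{-2\alpha k} b_k \leq \frac{1}{2} b_0^2 \sum_k 2^{-2\alpha k} + \frac{1}{2} \sum_k 2^{-2\alpha k} b_k^2$. Since $\sum_{k \geq 1} 2^{-2\alpha k} = (2^{2\alpha} - 1)^{-1}$ depends only on $\alpha$, and since $2B \subset 4B$ gives $b_0^2 = \fint_{2B} |u|^2 \leq (|4B|/|2B|) \fint_{4B} |u|^2 = 2^d b_1^2 \leq 2^{d + 2\alpha} \sum_{k \geq 1} 2^{-2\alpha k} b_k^2$ (the $k = 1$ term alone suffices), the first summand is also controlled by a constant times $\sum_k 2^{-2\alpha k} b_k^2$. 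Collecting all constants yields the claim with $C$ depending only on $d$ and $\alpha$. The only genuine obstacle is this concluding summation argument --- recognizing that one must work with ball averages rather than raw integrals and then exploit the summability of the weights $2^{-2\alpha k}$; the rest is a routine application of Cauchy--Schwarz and dyadic decomposition.
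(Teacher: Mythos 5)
Your proof is correct and uses the same ingredients as the paper: Cauchy--Schwarz (Jensen), the dyadic decomposition of the exterior integral with $|x_0-y|\gtrsim 2^k r$ on the $k$-th annulus, the summability of $2^{-2\alpha k}$, Young's inequality, and the closing observation $\fint_{2B}|u|^2 \leq 2^d \fint_{4B}|u|^2$ to fold the $k=0$ average into the $k\geq 1$ sum. The only difference is the order of operations: the paper applies Young's inequality to the whole product at the outset (splitting it into a local term $r^{-2\alpha}\int_{2B}|u|^2$ plus a squared nonlocal term) and then treats the square of the dyadic sum with Cauchy--Schwarz for series, whereas you multiply the two Cauchy--Schwarz bounds first and apply Young termwise inside the resulting sum --- a cosmetic rearrangement of the same estimates.
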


\begin{proof}
Jensen's inequality applied to the first integral followed by Young's inequality ensure for some constant $C > 0$ depending only on $d$ that
\begin{align*}
 \int_{2 B} \lvert u (x) \rvert \; \d x &\int_{\IR^d \setminus 2 B} \frac{\lvert u (y) \rvert}{\lvert x_0 - y \rvert^{d + 2 \alpha}} \; \d y \\
  &\leq C \bigg\{ \frac{1}{r^{2 \alpha}} \int_{2 B} \lvert u (x) \rvert^2 \; \d x + \bigg(r^{\frac{d}{2} + \alpha} \int_{\IR^d \setminus 2 B} \frac{\lvert u (y) \rvert}{\lvert x_0 - y \rvert^{d + 2 \alpha}} \; \d y \bigg)^2 \bigg\}.
\end{align*}
Furthermore, decomposing the second integral on the right-hand side into dyadic annuli yields
\begin{align*}
 \int_{\IR^d \setminus 2 B} \frac{\lvert u (y) \rvert}{\lvert x_0 - y \rvert^{d + 2 \alpha}} \; \d y = \sum_{k = 1}^{\infty} \int_{2^{k + 1} B \setminus 2^k B} \frac{\lvert u (y) \rvert}{\lvert x_0 - y \rvert^{d + 2 \alpha}} \; \d y.
\end{align*}
Now, apply Jensen's inequality to each of the integrals and further use that
\begin{align*}
 2^{k - 1} r \leq (2^k - 1) r \leq \lvert x_0 - y \rvert \qquad (y \in 2^{k + 1} B \setminus 2^k B)
\end{align*}
to establish for some constant $C > 0$ depending only on $d$ and $\alpha$
\begin{align*}
 \int_{\IR^d \setminus 2 B} \frac{\lvert u (y) \rvert}{\lvert x_0 - y \rvert^{d + 2 \alpha}} \; \d y \leq C \sum_{k = 1}^{\infty} 2^{- 2 \alpha k} r^{- 2 \alpha} \bigg( \frac{1}{\lvert 2^{k + 1} B \rvert} \int_{2^{k + 1} B} \lvert u (y) \rvert^2 \; \d y \bigg)^{\frac{1}{2}}.
\end{align*}
Finally, H\"older's inequality for series together with $\alpha > 0$ yield
\begin{align*}
 \bigg(r^{\frac{d}{2} + \alpha} \int_{\IR^d \setminus 2 B} \frac{\lvert u (y) \rvert}{\lvert x_0 - y \rvert^{d + 2 \alpha}} \; \d y \bigg)^2 \leq C r^{d - 2 \alpha} \sum_{k = 1}^{\infty} 2^{- 2 \alpha k} \frac{1}{\lvert 2^{k + 1} B \rvert} \int_{2^{k + 1} B} \lvert u (y) \rvert^2 \; \d y.
\end{align*}
This readily yields the desired estimate.
\end{proof}

\section{Proofs of Theorems~\ref{Thm: Resolvent} and~\ref{Thm: Maximal regularity}}
\label{Sec: Proofs}

\noindent Let $A$ denote the operator associated to the sesquilinear form
\begin{align*}
  \fa : \W^{\alpha , 2} (\IR^d) \times \W^{\alpha , 2} (\IR^d) &\to \IC \\
 (u , v) &\mapsto \int_{\IR^d} \int_{\IR^d} K (x , y) (u (x) - u (v)) \overline{(v(x) - v(y))} \; \d x \; \d y.
\end{align*}
Define $\Phi := \pi - \arccos (\Lambda^2)$. Notice that~\eqref{Eq: Ellipticity} implies that
\begin{align*}
 \fa (u , u) \in \S_{\pi - \Phi} \cup \{ 0 \} \qquad (u \in \W^{\alpha , 2} (\IR^d)).
\end{align*}
Thus, if $0 < \theta < \Phi$ and if $\lambda \in \S_{\theta}$, then an elementary trignometric consideration short that the sesquilinear form
\begin{align*}
  \fa_{\lambda} : \W^{\alpha , 2} (\IR^d) \times \W^{\alpha , 2} (\IR^d) \to \IC, \quad (u , v) \mapsto \lambda \int_{\IR^d} u \overline{v} \; \d x + \fa(u , v)
\end{align*}
is bounded and coercive. Thus, by the Lax--Milgram lemma, we find that $\lambda \in \rho(- A)$, the resolvent set of $- A$. Thus, for $f \in \L^2 (\IR^d)$ there exists a unique $u \in \dom(A)$ with $\lambda u + A u = f$. Testing this equation by $u$ yields by the same trigonometry consideration as above for a constant $C > 0$ depending only on $\theta$ and $\lambda$ that
\begin{align*}
 \lvert \lambda \rvert \| u \|_{\L^2 (\IR^d)}^2 + \int_{\IR^d} \int_{\IR^d} \frac{\lvert u (x) - u (y) \rvert^2}{\lvert x - y \rvert^{d + 2 \alpha}} \; \d x \; \d y \leq C \| f \|_{\L^2 (\IR^d)} \| u \|_{\L^2 (\IR^d)}.
\end{align*}
Now, forgetting about the double integral on the left-hand side and dividing by $\| u \|_{\L^2 (\IR^d)}$ shows that the $\L^2$ resolvent estimate
\begin{align}
\label{Eq: L2 Resolvent estimate}
 \| \lambda (\lambda + A)^{-1} f \|_{\L^2 (\IR^d)} = \lvert \lambda \rvert \| u \|_{\L^2 (\IR^d)} \leq C \| f \|_{\L^2 (\IR^d)}
\end{align}
is valid. \par
We remark, that for an operator $A$, the property of having maximal $\L^r$-regularity on the time interval $(0 , \infty)$ is in general \textit{stronger} than the property that $- A$ generates a bounded analytic semigroup. Indeed, combining the characterization of maximal $\L^r$-regularity via the notion of $\cR$-boundedness~\cite[Thm.~4.2]{Weis} and using the reformulation of $\cR$-boundedness via square function estimates if operators on $\L^p$-spaces are considered~\cite[Rem.~2.9]{Kunstmann_Weis} we arrive at the following statement. Namely, for any given $1 < r < \infty$ the operator $A$ has maximal $\L^r$-regularity if there exists $\theta > \pi / 2$ and a constant $C > 0$ such that for all $n_0 \in \IN$, $(\lambda_n)_{n = 1}^{n_0} \subset \S_{\theta}$, and $(f_n)_{n = 1}^{\infty} \subset \L^p (\IR^d)$ one has
\begin{align}
\label{Eq: Square function estimate}
 \Big\| \Big[ \sum_{n = 1}^{n_0} \lvert \lambda_n (\lambda_n + A)^{-1} f_n \rvert^2 \Big]^{1 / 2} \Big\|_{\L^p (\IR^d)} \leq C \Big\| \Big[ \sum_{n = 1}^{n_0} \lvert f_n \rvert^2 \Big]^{1 / 2} \Big\|_{\L^p (\IR^d)}.
\end{align}
Notice that $C$ has to be \textit{uniform} with respect to $n_0$, $(\lambda_n)_{n = 1}^{n_0} \subset \S_{\theta}$, and $(f_n)_{n = 1}^{\infty} \subset \L^p (\IR^d)$. Regarding the square root over the sum of squares as an Euclidean norm, this statement is equivalent to the boundedness of the following family of operator $\cT_{\theta}$ in the space $\Lop(\L^p (\IR^d ; \ell^2))$
\begin{align*}
 \cT_{\theta} := \{ (\lambda_1 (\lambda_1 + A)^{-1} , \dots , \lambda_{n_0} (\lambda_{n_0} + A)^{-1} , 0 , \dots) : n_0 \in \IN , (\lambda_n)_{n = 1}^{n_0} \subset \S_{\theta} \}.
\end{align*}
Here, an operator $T \in \cT_{\theta}$ acts on a function $f = (f_n)_{n \in \IN} \in \L^p (\IR^d ; \ell^2)$ via
\begin{align*}
 T f = (\lambda_1 (\lambda_1 + A)^{-1} f_1 , \dots , \lambda_{n_0} (\lambda_{n_0} + A)^{-1} f_{n_0} , 0 , \dots).
\end{align*}

Notice that the square function estimate~\eqref{Eq: Square function estimate} is in the case $p = 2$ equivalent to the uniform resolvent estimate in~\eqref{Eq: L2 Resolvent estimate}. Thus, we already know that the family of operators $\cT_{\theta}$ is bounded in $\Lop(\L^2 (\IR^d ; \ell^2))$. Let $\theta \in (0 , \Phi)$. We show in the following, that there exists $\eps > 0$ such that for all $p \geq 2$ that satisfy
\begin{align}
\label{Eq: Condition p}
 \Big\lvert \frac{1}{p} - \frac{1}{2} \Big\rvert < \frac{\alpha}{d} + \eps
\end{align}
the family $\cT_{\theta}$ is bounded in $\Lop(\L^p (\IR^d ; \ell^2))$. To this end, we verify that each operator in $\cT_{\theta}$ fulfills the assumptions of Theorem~\ref{Thm: Shens Lp extrapolation theorem, Banach space valued} with uniform constants for all operators in $\cT_{\theta}$. As the knowledge of the $\L^q$-operator norm in Theorem~\ref{Thm: Shens Lp extrapolation theorem, Banach space valued} is known to depend only on the quantities at stake, this will imply that $\cT_{\theta}$ is bounded in $\Lop(\L^p (\IR^d ; \ell^2))$. As the $\L^2(\IR^d ; \ell^2)$-boundedness is already established we concentrate on the non-local weak reverse H\"older estimate, i.e., estimate~\eqref{Eq: Weak reverse Hoelder inequality}. \par
To this end, let $x_0 \in \IR^d$ and $r > 0$. Let further $n_0 \in \IN$, $\lambda_1 , \dots , \lambda_{n_0} \in \S_{\theta}$, and let $f_1 , \dots , f_{n_0} \in \L^{\infty} (\IR^d)$ have compact support and be such that $f_n \equiv 0$ in $B(x_0 , 2 r)$ for all $n = 1 , \dots , n_0$. Define
\begin{align*}
 u_n := (\lambda_n + A)^{-1} f_n \qquad (n = 1 , \dots , n_0).
\end{align*}
Let $2 < p < \infty$ satisfy
\begin{align*}
 0 < \frac{1}{2} - \frac{1}{p} \leq \frac{\alpha}{d}.
\end{align*}
Choose $0 < \vartheta \leq \alpha$ such that
\begin{align*}
 \frac{1}{2} - \frac{1}{p} = \frac{\vartheta}{d}, \quad \text{i.e.,} \quad \W^{\vartheta , 2} (\IR^d) \hookrightarrow \L^p (\IR^d).
\end{align*}
Choose $\beta \in (0 , 1]$ that satisfies $\alpha \beta = \vartheta$. With this choice, the interpolation inequality
\begin{align*}
 \| v \|_{\L^p (\IR^d)} \leq C \| v \|_{\L^2 (\IR^d)}^{1 - \beta} \| v \|_{\W^{\alpha , 2}}^{\beta} \qquad (v \in \W^{\alpha , 2} (\IR^d))
\end{align*}
holds. Notice that by scaling, even the homogeneous counterpart of this interpolation inequality is valid, namely,
\begin{align*}
 \| v \|_{\L^p (\IR^d)} \leq C \bigg( \int_{\IR^d} \lvert v \rvert^2 \; \d x \bigg)^{\frac{1 - \beta}{2}} \bigg( \int_{\IR^d} \int_{\IR^d} \frac{\lvert v (x) - v (y) \rvert^2}{\lvert x - y \rvert^{d + 2 \alpha}} \; \d x \; \d y \bigg)^{\frac{\beta}{2}} \qquad (v \in \W^{\alpha , 2} (\IR^d)).
\end{align*}
Let $\eta \in \C_c^{\infty} (B(x_0 , 3 r / 2))$ with $0 \leq \eta \leq 1$, $\eta \equiv 1$ in $B(x_0 , r)$, and $\| \nabla \eta \|_{\L^{\infty}} \leq C_d / r$ for some constant $C_d > 0$ depending only on $d$. Define
\begin{align*}
 v := \bigg[ \sum_{n = 1}^{n_0} \lvert \lambda_n \rvert^2 \lvert u_n \eta \rvert^2 \bigg]^{\frac{1}{2}}.
\end{align*}
Applying the interpolation inequality above to $v$ then yields together with the properties of $\eta$
\begin{align*}
 &\bigg( \int_{B(x_0 , r)} \bigg[ \sum_{n = 1}^{n_0} \lvert \lambda_n \rvert^2 \lvert u_n (x) \rvert^2 \bigg]^{\frac{p}{2}} \; \d x \bigg)^{\frac{1}{p}} \\
 &\quad \leq \bigg( \int_{\IR^d} \bigg[ \sum_{n = 1}^{n_0} \lvert \lambda_n \rvert^2 \lvert u_n (x) \eta (x) \rvert^2 \bigg]^{\frac{p}{2}} \; \d x \bigg)^{\frac{1}{p}} \\
 &\quad \leq C \bigg( \int_{\IR^d} \sum_{n = 1}^{n_0} \lvert \lambda_n \rvert^2 \lvert u_n (x) \eta (x) \rvert^2 \; \d x \bigg)^{\frac{1 - \beta}{2}} \bigg( \sum_{n = 1}^{n_0} \lvert \lambda_n  \rvert^2 \int_{\IR^d} \int_{\IR^d} \frac{\lvert u_n (x) \eta (x) - u_n (y) \eta (y) \rvert^2}{\lvert x - y \rvert^{d + 2 \alpha}} \; \d x \; \d y \bigg)^{\frac{\beta}{2}}.
\end{align*}
Now, use that $\supp(\eta) \subset B(x_0 , 3 r / 2)$ and deduce by symmetry that
\begin{align*}
 \int_{\IR^d} \int_{\IR^d} \frac{\lvert u_n (x) \eta (x) - u_n (y) \eta (y) \rvert^2}{\lvert x - y \rvert^{d + 2 \alpha}} \; \d x \; \d y &= \int_{2 B} \int_{2 B} \frac{\lvert u_n (x) \eta (x) - u_n (y) \eta (y) \rvert^2}{\lvert x - y \rvert^{d + 2 \alpha}} \; \d x \; \d y \\
 &\qquad + 2 \int_{\IR^d \setminus 2 B} \int_{2 B} \frac{\lvert u_n (x) \rvert^2  \eta (x)^2}{\lvert x - y \rvert^{d + 2 \alpha}} \; \d x \; \d y.
\end{align*}
Apply Proposition~\ref{Prop: Caccioppoli} together with Lemma~\ref{Lem: Nonlocal} to each of these summands and finally deduce
\begin{align*}
 &\bigg( \int_{B(x_0 , r)} \bigg[ \sum_{n = 1}^{n_0} \lvert \lambda_n \rvert^2 \lvert u_n (x) \rvert^2 \bigg]^{\frac{p}{2}} \; \d x \bigg)^{\frac{1}{p}} \leq C r^{\frac{d}{2} - \alpha \beta} \bigg( \sum_{k = 1}^{\infty} 2^{- 2 \alpha k} \fint_{B(x_0 , 2^{k + 1} r)} \sum_{n = 1}^{n_0} \lvert \lambda_n \rvert^2 \lvert u_n (x) \rvert^2 \; \d x \bigg)^{\frac{1}{2}}.
\end{align*}
Now, since
\begin{align*}
 \frac{d}{2} - \alpha \beta = d \Big(\frac{1}{2} - \frac{\vartheta}{d}\Big) = \frac{d}{p}
\end{align*}
one can divide the previous estimate by $r^{d / p}$ and obtain the desired non-local weak reverse H\"older estimate. The non-local Gehring lemma proven in~\cite[Thm.~2.2]{Auscher_Bortz_Egert_Saari_2} now implies the existence of $\eps > 0$ depending only on $d$, $\alpha$, $\theta$, $\Lambda$, and $p$ such that
\begin{align*}
 &\bigg( \fint_{B(x_0 , r)} \bigg[ \sum_{n = 1}^{n_0} \lvert \lambda_n \rvert^2 \lvert u_n (x) \rvert^2 \bigg]^{\frac{p + \eps}{2}} \; \d x \bigg)^{\frac{1}{p + \eps}} \leq C \bigg( \sum_{k = 1}^{\infty} 2^{- 2 \alpha k} \fint_{B(x_0 , 2^{k + 2} r)} \sum_{n = 1}^{n_0} \lvert \lambda_n \rvert^2 \lvert u_n (x) \rvert^2 \; \d x \bigg)^{\frac{1}{2}}
\end{align*}
holds. Clearly, the right-hand side can be estimated by
\begin{align*}
 \bigg( \sum_{k = 1}^{\infty} 2^{- 2 \alpha k} \fint_{B(x_0 , 2^{k + 2} r)} \sum_{n = 1}^{n_0} \lvert \lambda_n \rvert^2 \lvert u_n (x) \rvert^2 \; \d x \bigg)^{\frac{1}{2}} \leq C \sup_{B^{\prime} \supset B(x_0 , r)} \bigg( \fint_{B^{\prime}} \sum_{n = 1}^{n_0} \lvert \lambda_n \rvert^2 \lvert u_n (x) \rvert^2 \; \d x \bigg)^{\frac{1}{2}}.
\end{align*}
By $B^{\prime}$ we denote here an arbitrary ball in $\IR^d$ containing $B(x_0 , r)$. \par
This implies, that each operator $T \in \cT_{\theta}$ fulfills a non-local weak reverse H\"older estimate with uniform constants. We conclude the statements of Theorems~\ref{Thm: Resolvent} and~\ref{Thm: Maximal regularity} in the case $p \geq 2$. Remark, that the conclusion of Theorem~\ref{Thm: Resolvent} follows from above by taking $n_0 = 1$. \par
To conclude the statements of the theorems for $p \leq 2$ satisfying~\eqref{Eq: Condition p} we argue by duality. Notice that the adjoint operator of $A$ belongs to the same class of operators as it is associated with the sesquilinear form
\begin{align*}
 \fb : \W^{\alpha , 2} (\IR^d) \times \W^{\alpha , 2} (\IR^d) \to \IC, \quad (u , v) \mapsto \int_{\IR^d} \int_{\IR^d} \overline{K (y , x)} (u (x) - u(y)) \overline{(v (x) - v (y))} \; \d x \; \d y.
\end{align*}
In particular, $\overline{K (y , x)}$ fulfills the ellipticity assumption~\eqref{Eq: Ellipticity} for the same constant $\Lambda$ as $K$ did. Now, since the dual space of $\L^p(\IR^d ; \ell^2)$ is $\L^{p^{\prime}} (\IR^d ; \ell^2)$ and $p^{\prime} > 2$ satisfies~\eqref{Eq: Condition p} we conclude the statements of Theorems~\ref{Thm: Resolvent} and~\ref{Thm: Maximal regularity} in the case $p < 2$ as well. \qed

\begin{bibdiv}
\begin{biblist}

\bibitem{Alberti_Bellettini}
G.~Alberti and G.~Bellettini.
\newblock {\em A nonlocal anisotropic model for phase transitions. I. The optimal profile problem\/}.
\newblock Math.\@ Ann.~\textbf{310} (1998), no.~3, 527--560.

\bibitem{Auscher_Bortz_Egert_Saari}
P.~Auscher, S.~Bortz, M.~Egert, and O.~Saari.
\newblock {\em Nonlocal self-improving properties: a functional analytic approach\/}.
\newblock Tunis.\@ J.\@ Math.~\textbf{1} (2019), no.~2, 151--183.

\bibitem{Auscher_Bortz_Egert_Saari_2}
P.~Auscher, S.~Bortz, M.~Egert, and O.~Saari.
\newblock {\em Non-local Gehring lemmas in spaces of homogeneous type and applications\/}.
\newblock Available at \url{arXiv:1707.02080v2}.

\bibitem{Bass_Ren}
R.~F.~Bass and H.~Ren.
\newblock {\em Meyers inequality and strong stability for stable-like operators\/}.
\newblock J.\@ Funct.\@ Anal.~\textbf{265} (2013), no.~1, 28--48.

\bibitem{Biccari_Warma_Zuazua}
U.~Biccari, M.~Warma, and E.~Zuazua.
\newblock {\em Local elliptic regularity for the Dirichlet fractional Laplacian\/}.
\newblock Adv.\@ Nonlinear Stud.~\textbf{17} (2017), no.~2, 387--409.

\bibitem{Biccari_Warma_Zuazua_2}
U.~Biccari, M.~Warma, and E.~Zuazua.
\newblock {\em Local regularity for fractional heat equations\/}.
\newblock Recent advances in PDEs: analysis, numerics and control, 233--249, SEMA SIMAI Springer Ser., 17, Springer, Cham, 2018.

\bibitem{Cabre_Sola-Morales}
X.~Cabr\'e and J.~Sol\`a-Morales.
\newblock {\em Layer solutions in a half-space for boundary reactions\/}.
\newblock Comm.\@ Pure Appl.\@ Math.~\textbf{58} (2005), no.~12, 1678--1732.

\bibitem{Caffarelli_Peral}
L.~A.~Caffarelli and I.~Peral.
\newblock {\em On $W^{1 , p}$ estimates for elliptic equations in divergence form\/}.
\newblock Comm.\@ Pure Appl.\@ Math.~\textbf{51} (1998), no.~1, 1--21.

\bibitem{Caffarelli_Roquejoffre_Sire}
L.~A.~Caffarelli, J.-M.~Roquejoffre, and Y.~Sire.
\newblock {\em Variational problems for free boundaries for the fractional Laplacian\/}.
\newblock J.\@ Eur.\@ Math.\@ Soc.~\textbf{12} (2010), no.~5, 1151--1179.

\bibitem{Caffarelli_Vasseur}
L.~A.~Caffarelli and A.~Vasseur.
\newblock {\em Drift diffusion equations with fractional diffusion and the quasi-geostrophic equation\/}.
\newblock Ann.\@ of Math.~\textbf{171} (2010), no.~3, 1903--1930.

\bibitem{Constantin}
P.~Constantin.
\newblock {\em Euler equations, Navier-Stokes equations and turbulence\/}.
\newblock In: Mathematical foundation of turbulent viscuous flows, 1--43, Lecture Notes in Mathematics, vol.~1871, Springer, Berlin, 2006.

\bibitem{Craig_Schanz_Sulem}
W.~Craig, U.~Schanz, and C.~Sulem.
\newblock {\em The modulational regime of three-dimensional water waves and the Davey-Stewartson system\/}.
\newblock Ann.\@ Inst.\@ H.\@ Poincar\'e Anal.\@ Non Lin\'eaire~\textbf{14} (1997), 615--667.

\bibitem{Denk_Hieber_Pruess}
R.~Denk, M.~Hieber, and J.~Pr\"uss.
\newblock {\em $\cR$-boundedness, Fourier multipliers and problems of elliptic and parabolic type\/}.
\newblock Mem.\@ Amer.\@ Math.\@ Soc.~\textbf{166} (2003), no.~788.

\bibitem{Deuring}
P.~Deuring.
\newblock {\em The Stokes resolvent in 3D domains with conical boundary points: nonregularity in $L^p$-spaces\/}.
\newblock Adv.\@ Differential Equations \textbf{6} (2001), no.~2, 175--228.

\bibitem{terElst_et_al}
A.~F.~M.~ter~Elst, R.~Haller-Dintelmann, J.~Rehberg, and P.~Tolksdorf.
\newblock {\em On the $\L^p$-theory for second-order elliptic operators in divergence form with complex coefficients\/}.
\newblock Available at \url{arXiv:1903.06692}.

\bibitem{Frehse}
J.~Frehse.
\newblock {\em An irregular complex valued solution to a scalar uniformly elliptic equation\/}.
\newblock Calc.\@ Var.\@ Partial Differential Equations~\textbf{33} (2008), no.~3, 263--266.

\bibitem{Grafakos}
L. Grafakos.
\newblock Classical Fourier Analysis. Graduate Texts in Mathematics, vol.~249,
\newblock Springer, New York, 2008.

\bibitem{Hofmann_Mayboroda_McIntosh}
S.~Hofmann, S.~Mayboroda, and Alan McIntosh.
\newblock {\em Second order elliptic operators with complex bounded measurable coefficients in $L^p$, Sobolev and Hardy spaces\/}.
\newblock Ann.\@ Sci.\@ \'Ec.\@ Norm.\@ Sup\'er.~\textbf{44} (2011), no.~5, 723--800.

\bibitem{Jerison_Kenig}
D.~Jerison and C.~E.~Kenig.
\newblock {\em The inhomogeneous Dirichlet problem in Lipschitz domains\/}.
\newblock J.\@ Funct.\@ Anal.~\textbf{130} (1995), no.~1, 161--219.

\bibitem{Kassmann}
M.~Kassmann.
\newblock {\em A priori estimates for integro-differential operators with measurable kernels\/}.
\newblock Calc.\@ Var.\@ Partial Differential Equations~\textbf{34} (2009), no.~1, 1--21.

\bibitem{Kenig_Lin_Shen}
C.~E.~Kenig, F.~Lin, and Z.~Shen.
\newblock {\em Homogenization of elliptic systems with Neumann boundary conditions\/}.
\newblock J.\@ Amer.\@ Math.\@ Soc.~\textbf{26} (2013), no.~4, 901--937.

\bibitem{Kunstmann_Weis}
P.~C. Kunstmann and L.~Weis.
\newblock {\em Maximal $L_p$-regularity for parabolic equations, Fourier multiplier theorems and $H^\infty$-functional calculus\/}.
\newblock In: Functional analytic methods for evolution equations, Lecture Notes in Mathematics, vol.~1855, Springer, Berlin, 2004, 65--311.

\bibitem{Kuusi_Mingione_Sire}
T.~Kuusi, G.~Mingione, and Y.~Sire.
\newblock {\em Nonlocal self-improving properties\/}.
\newblock Anal.\@ PDE~\textbf{8} (2015), no.~1, 57--114.

\bibitem{Leonori_Peral_Primo_Soria}
T.~Leonori, I.~Peral, A.~Primo, F.~Soria.
\newblock {\em Basic estimates for solutions of a class of nonlocal elliptic and parabolic equations\/}.
\newblock Discrete Contin.\@ Dyn.\@ Syst.~\textbf{35} (2015), no.~12, 6031--6068.

\bibitem{Majda_Tabak}
A.~J.~Majda and E.~Tabak.
\newblock {\em A two-dimensional model for quasigeostrophic flow: comparison with the two-dimensional Euler flow\/}.
\newblock Nonlinear phenomena in ocean dynamics (Los Alamos, NM, 1995). Phys.\@ D~\textbf{98} (1996), no.~2-4, 515--522.

\bibitem{Schikorra}
A.~Schikorra.
\newblock {\em Nonlinear commutators for the fractional $p$-Laplacian and applications\/}.
\newblock Math.\@ Ann.~\textbf{366} (2016), no.~1-2, 695--720.

\bibitem{Shen}
Z.~Shen.
\newblock {\em Bounds of Riesz transforms on $L^p$ spaces for second order elliptic operators\/}.
\newblock Ann.\@ Inst.\@ Fourier (Grenoble)~\textbf{55} (2005), no.~1, 173--197.

\bibitem{Shen_2}
Z.~Shen.
\newblock {\em The $L^p$ Dirichlet problem for elliptic systems on Lipschitz domains\/}.
\newblock Math.\@ Res.\@ Lett.~\textbf{13} (2006), no.~1, 143--159.

\bibitem{Shen_3}
Z.~Shen.
\newblock {\em A relationship between the Dirichlet and the regularity problems for elliptic equations\/}.
\newblock Math.\@ Res.\@ Lett.~\textbf{14} (2007), no.~2, 205--213.

\bibitem{Shen_4}
Z.~Shen.
\newblock {\em Resolvent estimates in $L^p$ for the Stokes operator in Lipschitz domains\/}.
\newblock Arch.\@ Ration.\@ Mech.\@ Anal.~\textbf{205} (2012), no.~2, 395--424.

\bibitem{Sire_Valdinoci}
Y.~Sire and E.~Valdinoci.
\newblock {\em Fractional Laplacian phase transitions and boundary reactions: a geometric inequality and a symmetry result\/}.
\newblock J.\@ Funct.\@ Anal.~\textbf{256} (2009), no.~6, 1842--1864.

\bibitem{Stein}
E.~M.~Stein.
\newblock Singular integrals and differentiability properties of functions.
\newblock Princeton University Press, Princeton, 1986.

\bibitem{Toland}
J.~F.~Toland.
\newblock {\em The Peierls-Nabarro and Benjamin-Ono equations\/}.
\newblock J.\@ Funct.\@ Anal.~\textbf{145} (1997), no.~1, 136--150.

\bibitem{Tolksdorf_Dissertation}
P.~Tolksdorf.
\newblock {\em On the $\mathrm{L}^p$-theory of the Navier-Stokes equations on Lipschitz domains\/}.
\newblock PhD thesis, TU Darmstadt, 2017, \url{http://tuprints.ulb.tu-darmstadt.de/5960/}.

\bibitem{Tolksdorf}
P.~Tolksdorf.
\newblock {\em $\cR$-sectoriality of higher-order elliptic systems on general bounded domains\/}.
\newblock J.\@ Evol.\@ Equ.~\textbf{18} (2018), no.~2, 323--349.

\bibitem{Tolksdorf_Convex}
P.~Tolksdorf.
\newblock {\em The Stokes resolvent problem: Optimal pressure estimates and remarks on resolvent estimates in convex domains\/}.
\newblock Available at \url{arXiv:1911.06231}.

\bibitem{Wei_Zhang}
W.~Wei and Z.~Zhang.
\newblock {\em $L^p$ resolvent estimates for constant coefficient elliptic systems in Lipschitz domains\/}.
\newblock J.\@ Funct.\@ Anal.~\textbf{267} (2014), no.~9, 3262--3293.

\bibitem{Weis}
L.~Weis.
\newblock {\em Operator-valued Fourier multiplier theorems and maximal $L_p$-regularity\/}.
\newblock Math.\@ Ann.~\textbf{319} (2001), no.~4, 735--758.

\end{biblist}
\end{bibdiv}

\end{document}